\theoremstyle{plain}
\newtheorem{thm}{Theorem}[section]
\newtheorem*{thm*}{Theorem}
\newtheorem*{cor*}{Corollary}
\newtheorem{prop}[thm]{Proposition}
\newtheorem{lem}[thm]{Lemma}
\newtheorem{cor}[thm]{Corollary}
\newtheorem*{claim*}{Claim}
\theoremstyle{definition}
\newtheorem{defn}[thm]{Definition}
\newtheorem{ex}[thm]{Example}
\theoremstyle{remark}
\numberwithin{equation}{thm}
\def\Im{\operatorname{Im}}
\def\Ker{\operatorname{Ker}}
\def\Hom{\operatorname{Hom}}
\def\mod{\mathrm{mod}}
\def\rank{\mathrm{rank}}
\def\e{\mathrm{e}}
\def\m{\mathfrak m}
\def\n{\mathfrak n}
\def\p{\mathfrak p}
\def\K{\mathrm{K}}
\newcommand{\rma}{\mathrm{a}}
\newcommand{\rmr}{\mathrm{r}}
\newcommand{\rmK}{\mathrm{K}}
\newcommand{\rmQ}{\mathrm{Q}}
\newcommand{\calR}{\mathcal{R}}
\newcommand{\calS}{\mathcal{S}}
\newcommand{\fkn}{\mathfrak{n}}
\newcommand{\fkp}{\mathfrak{p}}
\newcommand{\fkM}{\mathfrak{M}}
\newcommand{\fkN}{\mathfrak{N}}
\newcommand{\mapright}[1]{%
\smash{\mathop{%
\hbox to 1cm{\rightarrowfill}}\limits^{#1}}}
\newcommand{\mapleft}[1]{%
\smash{\mathop{%
\hbox to 1cm{\leftarrowfill}}\limits_{#1}}}
\def\Sym{\mathrm{Sym}}
\def\R{{\mathcal R}}
\def\R{{R}}
\begin{document}

\setlength{\baselineskip}{15pt}
\title{The almost Gorenstein  Rees algebras over two-dimensional regular local rings}
\author{Shiro Goto}
\address{Department of Mathematics, School of Science and Technology, Meiji University, 1-1-1 Higashi-mita, Tama-ku, Kawasaki 214-8571, Japan}
\email{goto@math.meiji.ac.jp}
\author{Naoyuki Matsuoka}
\address{Department of Mathematics, School of Science and Technology, Meiji University, 1-1-1 Higashi-mita, Tama-ku, Kawasaki 214-8571, Japan}
\email{naomatsu@meiji.ac.jp}
\author{Naoki Taniguchi}
\address{Department of Mathematics, School of Science and Technology, Meiji University, 1-1-1 Higashi-mita, Tama-ku, Kawasaki 214-8571, Japan}
\email{taniguti@math.meiji.ac.jp}
\author{Ken-ichi Yoshida}
\address{Department of Mathematics, College of Humanities and Sciences, Nihon University, 3-25-40 Sakurajosui, Setagaya-Ku, Tokyo 156-8550, Japan}
\email{yoshida@math.chs.nihon-u.ac.jp}

\thanks{2010 {\em Mathematics Subject Classification.} 13H10, 13H15, 13A30}
\thanks{{\em Key words and phrases.} almost Gorenstein local ring, almost Gorenstein graded ring, Rees algebra}
\thanks{The first author was partially supported by JSPS Grant-in-Aid for Scientific Research 25400051. 
The second author was partially supported by JSPS Grant-in-Aid for Scientific Research 26400054. The third author was partially supported by Grant-in-Aid for JSPS Fellows 26-126 and by JSPS Research Fellow. The fourth author was partially supported by JSPS Grant-in-Aid for Scientific Research 25400050.}

\begin{abstract} Let $(R,\m)$ be a two-dimensional regular local ring with infinite residue class field. Then the Rees algebra $\calR (I)= \bigoplus_{n \ge 0}I^n$ of $I$ is an almost Gorenstein graded ring in the sense of \cite{GTT}  for every $\m$-primary integrally closed ideal $I$ in $R$.
\end{abstract}

\maketitle

\section{Introduction}\label{intro}
The main purpose of this paper is to prove the following.

\begin{thm}\label{3.4}
Let $(R,\m)$ be a two-dimensional regular local ring with infinite residue class field and $I$ an $\m$-primary integrally closed ideal in $R$. Then the Rees algebra $\calR(I)=\bigoplus_{n \ge 0}I^n$ of $I$ is an almost Gorenstein graded ring. 
\end{thm}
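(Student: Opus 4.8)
The plan is to verify the defining condition of \cite{GTT} head-on. Once we know that $\calR=\calR(I)$ is a Cohen--Macaulay normal domain of dimension $3$ with $\mathrm{a}$-invariant $\mathrm{a}(\calR)=-1$, the task reduces to producing a graded short exact sequence
\[
0 \to \calR \to \K_{\calR}(1) \to C \to 0
\]
in which $C$ is an Ulrich $\calR$-module, i.e.\ a Cohen--Macaulay graded module of dimension $\dim\calR-1=2$ with $\mathrm{e}(C)=\mu(C)$.

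The first step is to collect the structural input from Zariski's theory of complete ideals. Since $R$ is a two-dimensional regular local ring, every $\m$-primary integrally closed ideal $I$ is normal (a product of complete ideals is complete), and by the theorem of Lipman--Teissier one has reduction number at most one: $I^2=QI$ for any minimal reduction $Q$ of $I$. By the Valabrega--Valla and Goto--Shimoda/Ikeda--Trung machinery in dimension two, reduction number one forces $G:=\gr_I(R)$ to be Cohen--Macaulay and $\calR$ to be a Cohen--Macaulay normal domain of dimension three. Moreover, a direct computation with the two fundamental sequences $0\to\calR_+\to\calR\to R\to 0$ and $0\to\calR_+(1)\to\calR\to G\to 0$, using $\dim R=\dim G=2$, shows $[\mathrm{H}^{3}_{\fkM}(\calR)]_n\cong[\mathrm{H}^{3}_{\fkM}(\calR)]_{n+1}$ for $n\neq -1$ and hence $\mathrm{a}(\calR)=-1$; this is where the twist $\K_{\calR}(-\mathrm{a}(\calR))=\K_{\calR}(1)$ comes from, and it guarantees that $\K_{\calR}$ is a graded reflexive rank-one module concentrated in degrees $\ge 1$.

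Next I would pin down the graded canonical module. Using the Goto--Nishida formula for the canonical module of a Cohen--Macaulay Rees algebra, together with $\omega_R\cong R$ (as $R$ is regular) and the rational-singularity structure of the normalized blow-up $X=\Proj\calR\to\Spec R$, I would identify each graded piece $[\K_{\calR}]_n$ ($n\ge 1$) with an explicit fractional ideal $\mathfrak{K}_n$ of $R$, where $\mathfrak{K}_1\supseteq R$ and $\mathfrak{K}_{n+1}\supseteq I^{n}$, and $[\K_{\calR}]_n=0$ for $n\le 0$. The element $1\in R\subseteq\mathfrak{K}_1=[\K_{\calR}(1)]_0$ then corresponds, via $[\Hom_{\calR}(\calR,\K_{\calR}(1))]_0=\mathfrak{K}_1$, to a degree-$0$ homomorphism $\calR\to\K_{\calR}(1)$; because $\calR$ is a domain and both modules have rank one, this map is automatically injective, and its cokernel $C$ has graded pieces $[C]_n=\mathfrak{K}_{n+1}/I^{n}$.

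The genuine work, and the step I expect to be the main obstacle, is to show that this $C$ is Ulrich. The plan is to reduce the Cohen--Macaulayness and the equality $\mathrm{e}(C)=\mu(C)$ to a statement about the subquotients $\mathfrak{K}_{n+1}/I^{n}$, and it is precisely here that the hypothesis that $I$ is \emph{integrally closed} is indispensable: Zariski's unique factorization of complete ideals and the resulting control of the fractional ideals $\mathfrak{K}_n$ should force each subquotient to be annihilated by $\m$, so that $C$ becomes a module over the two-dimensional fiber cone $\calR/\m\calR$ and is maximal Cohen--Macaulay there, after which the Ulrich equality $\mathrm{e}(C)=\mu(C)$ becomes a Hilbert-function computation governed by the order of $I$ and the relation $I^2=QI$. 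Establishing that $C$ is maximal Cohen--Macaulay over the fiber cone and that no unexpected generators appear in higher degrees is the delicate point; mere Cohen--Macaulayness of $\calR$ is not enough, and one must use the full strength of the theory of integrally closed ideals in two-dimensional regular local rings. In the boundary case of reduction number zero (e.g.\ $I=\m$), $I$ is a complete intersection, $\calR$ is Gorenstein, and $C=0$, so the statement holds trivially.
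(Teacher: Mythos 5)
Your skeleton---$\calR$ Cohen--Macaulay and normal with $\rma(\calR)=-1$, a degree-zero embedding $\calR\hookrightarrow \rmK_{\calR}(1)$, and the task reduced to showing the cokernel $C$ is Ulrich---is exactly the shape of the paper's argument, and everything up to that reduction is sound (injectivity via the domain/torsion-free argument works, even if the paper gets it from \cite[Lemma 3.1]{GTT}). But the mechanism you propose for the step you yourself flag as the genuine work is provably wrong. You want $\m C=0$, so that $C$ becomes a maximal Cohen--Macaulay module over the fiber cone $\calR/\m\calR$. This is impossible unless $\calR$ is already Gorenstein: any degree-zero graded map $\varphi\colon \calR\to \rmK_{\calR}(1)$ sends $1$ to an element $h\in[\rmK_{\calR}]_1$, and since $\rmK_{\calR}(1)\cong J\calR$ with $J=Q:I$ (Proposition \ref{3.1}), the degree-zero piece of the cokernel is $C_0\cong J/(h)$ with $J$ an $\m$-primary ideal. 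Then $\m C_0=(0)$ forces $\m J\subseteq (h)$; but $\m J$ is $\m$-primary of height two while a proper principal ideal has height at most one, so $h$ would have to be a unit, i.e.\ $J=R$ and $\calR$ Gorenstein. Already for $I=\m^{\ell}$ with $\ell\ge 2$ one has $J=\m^{\ell-1}\neq R$, and the cokernel is genuinely not killed by $\m$. An Ulrich $C$ here is \emph{not} a fiber-cone module: the condition one can actually achieve is $\fkM C=(\xi,\eta)C$ for a mixed pair of homogeneous parameters, with $\xi=f\in\m$ of degree $0$ and $\eta=gt$ of degree $1$ (Theorem \ref{3.3}).

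What is missing is the actual engine of the proof: Verma's joint reduction theorem. The paper first identifies $\rmK_{\calR}(1)\cong J\calR$ by an elementary computation---$T=R[Qt]\subseteq \calR=T+T{\cdot}It$ with $T$ Gorenstein and $\rma(T)=-1$ gives $\rmK_{\calR}(1)\cong T:_T It=J\calR$---so no Goto--Nishida formula or rational-singularity geometry is needed; then normality of $\calR$ makes $J=Q:I$ integrally closed (Corollary \ref{3.2}); and then Theorem \ref{verma} (joint reductions of integrally closed ideals in a two-dimensional regular local ring have joint reduction number zero) produces $f\in\m$, $g\in I$, $h\in J$ with $\m J=fJ+\m h$ and $IJ=gJ+Ih$. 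These two equalities translate directly into $\fkM C=(f,gt)C$ for $C=J\calR/\calR h$, which is the Ulrich condition. Your appeal to ``Zariski's unique factorization of complete ideals'' gestures at the same circle of ideas, but unique factorization alone does not yield the two joint-reduction equalities, and your substitute for them fails for the reason above; without Verma's theorem (or an equivalent input) the proof does not close.
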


As a direct consequence one has the following.

\begin{cor}\label{3.5}
Let $(R,\m)$ be a two-dimensional regular local ring with infinite residue class field. Then $\calR(\m^\ell)$  is an almost Gorenstein graded ring for every integer $\ell > 0$.
\end{cor}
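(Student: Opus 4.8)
The plan is to realize the almost Gorenstein structure of $\calR=\calR(I)$ by an explicit exact sequence
\[
0\longrightarrow \calR \longrightarrow K_{\calR}(-\a(\calR)) \longrightarrow C \longrightarrow 0,
\]
whose graded canonical module is computed via a minimal reduction of $I$. First I would fix a minimal reduction $Q=(a,b)$ of $I$, which exists because the residue field is infinite. Since $R$ is two-dimensional regular and $I$ is $\m$-primary and integrally closed, Zariski's theory of complete ideals (and its refinements by Lipman--Teissier and Huneke) supplies the two facts I will lean on: the reduction number is one, that is $I^{2}=QI$ (so $I^{n+1}=Q^{n}I$ for $n\ge 1$), and every power $I^{n}$ is again integrally closed, whence $\calR$ is a Cohen--Macaulay \emph{normal} domain of dimension $3$. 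In particular $\calR$ is a module-finite birational extension of the subalgebra $S:=\calR(Q)=R[at,bt]$, generated over $S$ in degrees $0$ and $1$ by $1$ and $It$.

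The next step is the canonical module. Because $a,b$ is a regular sequence, $S\cong R[X,Y]/(bX-aY)$ is a graded hypersurface, hence Gorenstein of dimension $3$ with $K_{S}\cong S(-1)$ and $\a(S)=-1$. As $\calR$ is a maximal Cohen--Macaulay $S$-module (it is finite over $S$ and Cohen--Macaulay of the same dimension), one has
\[
K_{\calR}\cong\Hom_{S}(\calR,K_{S})\cong\Hom_{S}(\calR,S)(-1)\cong\c(-1),
\]
where, by birationality, $\c:=\Hom_{S}(\calR,S)$ is identified via $\varphi\mapsto\varphi(1)$ with the conductor $S:\calR$ inside the common total ring of fractions; it is a homogeneous ideal common to $S$ and $\calR$. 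A degreewise computation using $I^{n}=Q^{n-1}I$ and the colon formula $Q^{n+1}:Q=Q^{n}$ identifies its components as $\c_{n}=(Q^{n+1}:I)\,t^{n}$ for $n\ge 0$. Since $Q\subseteq Q:I=\c_{0}$, the ideal $\c$ is nonzero in degree $0$, so $\a(\calR)=-1$ and therefore $K_{\calR}(-\a(\calR))=K_{\calR}(1)\cong\c$.

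It then remains to build the sequence with $K_{\calR}(1)\cong\c$. A degree-preserving $\calR$-homomorphism $\calR\to\c$ is multiplication by an element $\gamma\in\c_{0}=Q:I$; I would take $\gamma$ general (again using that the residue field is infinite) and set $C:=\c/\gamma\calR$, which is injective on $\calR$ since $\calR$ is a domain. Both $\calR$ and $\c\cong K_{\calR}(1)$ are maximal Cohen--Macaulay, so the depth lemma gives $\depth C\ge 2$, while $C$ is a torsion module over the three-dimensional domain $\calR$, so $\dim C\le 2$. Consequently either $C=0$, in which case $\calR$ is Gorenstein, or $\dim C=\depth C=2$ and $C$ is automatically Cohen--Macaulay of dimension $2=\dim\calR-1$. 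The entire theorem thus reduces to the single assertion that $C$ is an \emph{Ulrich} $\calR$-module, i.e. $\mu_{\calR}(C)=\e^{0}_{\mathfrak M}(C)$, where $\mathfrak M$ denotes the graded maximal ideal.

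This last equality is the main obstacle. I expect to attack it by first determining the graded structure of the conductor: concretely, I would try to prove that $\c$ is generated in degree $0$, equivalently $Q^{n+1}:I=(Q:I)\,I^{n}$ for all $n$ (the inclusion $\supseteq$ is elementary, and the reverse is the real content), so that $C$ and its multiplicity are governed entirely by the single complete ideal $Q:I$ in the two-dimensional regular ring $R$. Granting this, I would exhibit a homogeneous system of parameters $y_{1},y_{2}$ for $C$---a general element of $R=\calR_{0}$ together with a general element of $\calR_{1}=I$---and establish the minimal-multiplicity identity $\mathfrak M C=(y_{1},y_{2})C$, which yields $\e^{0}_{\mathfrak M}(C)=\ell_{\calR}(C/\mathfrak M C)=\mu_{\calR}(C)$. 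The integral closedness of \emph{all} the powers $I^{n}$, and the resulting normality of $\calR$, is precisely what should force this identity; verifying it is where the special geometry of complete ideals in a two-dimensional regular local ring---Zariski's unique factorization into simple complete ideals and the consequent good behaviour of the colon ideals $Q^{n+1}:I$---must be used in an essential way.
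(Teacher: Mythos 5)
Your reductions retrace the paper's own skeleton up to a point: your conductor computation is exactly Proposition \ref{3.1} (with the small correction that the degree-$n$ component of the conductor is $\left(Q^n \cap (Q^{n+1}:I)\right)t^n$, not $(Q^{n+1}:I)t^n$, and the identity $Q^n \cap (Q^{n+1}:I) = Q^n(Q:I)$ -- valid for \emph{any} $I$ with $I^2 = QI$, no integral closedness needed -- already shows the conductor equals $J\calR$ with $J = Q:I$, i.e.\ is generated in degree zero; so the step you postpone as ``the real content'' of the degreewise analysis is in fact the easy part). The genuine gap is precisely where you write that the Ulrich equality ``is the main obstacle'': the existence of $y_1, y_2$ with $\fkM C = (y_1,y_2)C$ is never proved, only predicted to follow from ``the special geometry of complete ideals.'' That prediction is the entire theorem. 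The paper closes it with two inputs your sketch lacks: first, normality of $\calR$ forces $J$ itself to be integrally closed (Corollary \ref{3.2}, because $J\calR \cong \mathrm{K}_{\calR}(1)$ is an unmixed height-one ideal of the normal ring $\calR$); second, Verma's joint reduction theorem (Theorem \ref{verma}) gives joint reduction number zero for integrally closed $\m$-primary ideals in a two-dimensional regular local ring, producing elements $f \in \m$, $g \in I$, $h \in J$ with $IJ = gJ + Ih$ and $\m J = fJ + \m h$; these identities yield $\fkM{\cdot}J\calR \subseteq (f, gt){\cdot}J\calR + \calR h$ and hence $\fkM C = (f,gt)C$ for $C = J\calR/\calR h$, which is the criterion of Theorem \ref{3.3}. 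Taking $\gamma \in J$ merely ``general'' does not produce these equalities by itself -- their validity for a (general) choice \emph{is} Verma's theorem -- so without it, or a substitute, your argument does not terminate.

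Note also that for the statement actually under review, namely $I = \m^\ell$ rather than an arbitrary integrally closed ideal, the missing step admits a short monomial verification that bypasses all of the complete-ideal machinery you invoke: writing $\m = (x,y)$ and $Q = (x^\ell, y^\ell)$, one has $I^2 = QI$ and $J = Q:\m^\ell = \m^{\ell-1}$, and with $f = x$, $g = x^\ell$, $h = y^{\ell-1}$ the equalities $\m\m^{\ell-1} = x\m^{\ell-1} + \m y^{\ell-1}$ and $\m^\ell\m^{\ell-1} = x^\ell\m^{\ell-1} + \m^\ell y^{\ell-1}$ hold because every monomial $x^iy^j$ with $i + j = 2\ell-1$ satisfies $i \ge \ell$ or $j \ge \ell$. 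This is the paper's example of the ideals $(x^m)+\m^n$ specialized to $m = n = \ell$, and it proves the corollary directly via Theorem \ref{3.3}; the paper itself instead obtains the corollary instantly from Theorem \ref{3.4}, whose proof is where Verma's theorem enters. Had you specialized your sketch to $\m^\ell$, you could have completed it elementarily; as written, it is an outline whose decisive step is missing.
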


The proof of Theorem 1.1 depends on a result of J. Verma \cite{V} which guarantees the existence of joint reductions with joint reduction number zero. Therefore our method of proof works also for two-dimensional rational singularities, which we shall discuss in the forthcoming paper \cite{GTY1}. Here, before entering details, let us recall the notion of almost Gorenstein {\it graded/local} ring as well as some historical notes  about it.

Almost Gorenstein rings are a new class of Cohen-Macaulay rings, which are not necessarily Gorenstein, but still good, possibly next to the Gorenstein rings. The notion of these local rings dates back to the paper \cite{BF}  of V. Barucci and R. Fr\"oberg in 1997, where they dealt with one-dimensional analytically unramified local rings.  Because their notion is not flexible enough to analyze analytically ramified rings, in 2013 S. Goto, N. Matsuoka, and T. T. Phuong \cite{GMP} extended the notion to arbitrary (but still of dimension one)  Cohen-Macaulay local rings. The reader may consult \cite{GMP} for examples of analytically ramified almost Gorenstein local rings. In 2015 S. Goto, R. Takahashi, and N. Taniguchi \cite{GTT} finally gave  the definition of almost Gorenstein graded/local rings of higher dimension. In \cite{GTT} the authors developed a very nice theory, exploring many concrete examples. We recall here the definition of almost Gorenstein graded rings given in \cite{GTT}. We refer to \cite[Definition 1.1]{GTT} for the definition of almost Gorenstein {\it local} rings.

\begin{defn}[{\cite[Definition 1.5]{GTT}}]\label{1.2} Let $R = \bigoplus_{n \ge 0}R_n$ be a Cohen-Macaulay graded ring such that   $R_0$ is a local ring. Suppose that $R$ possesses the graded canonical module $\K_R$. Let $\fkM$ be the unique graded maximal ideal of $R$ and $a = \rma (R)$ the $\rma$-invariant of $R$. Then we say that $R$ is an almost Gorenstein {\it graded} ring, if there exists an exact sequence
$$
0 \to R \to \mathrm{K}_R(-a) \to C \to 0
$$
of graded $R$-modules such that  $\mu_R(C) = \e_\fkM^0(C)$, where $\mu_R(C)$ denotes the number of elements in a minimal system of generators of $C$ and $\e_\fkM^0(C)$ is the multiplicity of $C$ with respect to $\fkM$. Remember that  $\mathrm{K}_R(-a)$ stands for   the graded $R$-module whose underlying $R$-module is the same as that of $\K_R$ and whose grading is given by $[\mathrm{K}_R(-a)]_n = [\mathrm{K}_R]_{n-a}$ for all $n \in \Bbb Z$. 
\end{defn} 

Definition \ref{1.2} means that if $R$ is an almost Gorenstein graded ring, then even though  $R$ is not a Gorenstein ring, $R$ can be  embedded into the graded $R$-module $\mathrm{K}_R(-a)$, so that the difference $\mathrm{K}_R(-a)/R$ is a graded  Ulrich $R$-module (\cite{BHU}) and behaves well. The reader may consult \cite{GTT} about the basic theory of almost Gorenstein graded/local  rings and the relation between the graded theory and the local theory, as well.  For instance, it is shown  in \cite{GTT} that certain Cohen-Macaulay local rings of finite Cohen-Macaulay representation type, including two-dimensional rational singularities, are almost Gorenstein local rings. The almost Gorenstein local rings which are not Gorenstein are G-regular (\cite[Corollary 4.5]{GTT}) in the sense of \cite{T}. They are now getting revealed to enjoy good properties.

The theory of Rees algebras has been satisfactorily developed and nowadays one knows many Cohen-Macaulay Rees algebras (see, e.g., \cite{GS, HU, MU, SUV}). Among them Gorenstein Rees algebras are rather rare (\cite{Ikeda}). Nevertheless, although  they are not Gorenstein, some of Cohen-Macaulay Rees algebras are still good and could be {\it almost Gorenstein} rings, which we are eager to report in this paper. Except \cite[Theorems 8.2, 8.3]{GTT}, our Theorem 1.1 is the first attempt to answer the question of when the Rees algebras are almost Gorenstein graded rings. For these reasons our Theorem 1.1 might have some significance.

We briefly explain how this paper is organized. The proof of Theorem 1.1 shall  be given in Section 2. For the Rees algebras of modules over two-dimensional regular local rings  we have a similar result, which we give in Section 2 (Corollary \ref{3.6}). 
In Section 3 we explore the case where the ideals are linearly presented over power series rings. The result (Theorem \ref{4.1}) seems to  suggest that almost Gorenstein Rees algebras are still rare, when the dimension of base rings is greater than two, which we shall discuss in the forthcoming paper \cite{GTY2}. In Section 4 we study the Rees algebras of socle ideals $Q:\m$ in a two-dimensional regular local ring $(R,\m)$ and show that Rees algebras are not necessarily almost Gorenstein graded rings even for these ideals (Corollary  \ref{badex}).

In what follows, unless otherwise specified, let $(R,\m)$ denote a Cohen-Macaulay local ring. For each finitely generated $R$-module $M$ let $\mu_R(M)$ (resp. $\e_\m^0(M)$) denote the number of elements in a minimal system of generators for $M$ (resp. the multiplicity of $M$ with respect to $\m$). Let $\mathrm{K}_R$ stand for the canonical module of $R$.




\section{Proof of Theorem 1.1}
The purpose of this section is to prove Theorem 1.1. Let $(R, \m)$ be a Gorenstein local ring with $\dim R = 2$ and let $I \subsetneq R$ be an $\m$-primary ideal of $R$. Assume that $I$ contains  a parameter ideal $Q=(a,b)$ of $R$ such that $I^2 = Q I$. We set $J=Q:I$. Let
$$
\calR = R[It] \subseteq R[t] \ \ \text{and} \ \ T = R[Qt]\subseteq R[t],
$$
where $t$ stands for an indeterminate over $R$.  Remember that the Rees algebra $\calR$ of $I$ is a Cohen-Macaulay ring (\cite{GS}) with $\rma(\calR) = -1$ and $\calR = T + T{\cdot}It$, while  the Rees algebra $T$ of $Q$ is a Gorenstein ring of dimension $3$  and $\mathrm{a}(T)=-1$ (remember that $T \cong R[x,y]/(bx-ay)$). Hence $\rmK_T(1) \cong T$ as a graded $T$-module, where $\mathrm{K}_T$ denotes the graded canonical module of $T$.

Let us begin with the following, which  is a special case of \cite[Theorem 2.7 (a)]{U}. We note a brief proof.

\begin{prop}\label{3.1}
$\rmK_{\calR}(1) \cong J \calR$ as a graded $\calR$-module.
\end{prop}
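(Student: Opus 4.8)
The plan is to exploit the module-finite extension $T\subseteq\calR$. Since $I^{n+1}=Q^nI$ for all $n\ge 0$, the algebra $\calR=T+T{\cdot}It$ is a finitely generated $T$-module, and both $T$ and $\calR$ are Cohen-Macaulay of dimension $3$. As $T$ is Gorenstein with $\rmK_T\cong T(-1)$, the standard behaviour of canonical modules under a finite morphism of Cohen-Macaulay rings of the same dimension gives $\rmK_{\calR}\cong\Hom_T(\calR,\rmK_T)\cong\Hom_T(\calR,T)(-1)$ as graded $\calR$-modules. Hence $\rmK_{\calR}(1)\cong\Hom_T(\calR,T)$, and everything reduces to producing a graded $\calR$-isomorphism $\Hom_T(\calR,T)\cong J\calR$.

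To compute $\Hom_T(\calR,T)$ I would pass to the common total quotient ring $L$ of $T$ and $\calR$ (they share it because $t=at/a\in L$). Since $\calR$ is finite and torsion-free of rank one over $T$, every $T$-homomorphism $\calR\to T$ is multiplication by an element of $L$, so $\Hom_T(\calR,T)\cong T:_L\calR=\{\,\xi\in L:\xi\calR\subseteq T\,\}$. Reading this off in each degree, its degree-$d$ component is $\{f\in Q^d: fI^n\subseteq Q^{n+d}\ \text{for all}\ n\ge 0\}$; using $I^n=Q^{n-1}I$ the conditions for $n\ge 2$ follow from the one for $n=1$, so the component collapses to $Q^d\cap(Q^{d+1}:I)$. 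Thus the whole problem becomes the ideal identity $Q^d\cap(Q^{d+1}:I)=JI^d$ for every $d\ge 0$, that is, $T:_L\calR=J\calR$ inside $L$.

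The containment $JI^d\subseteq Q^d\cap(Q^{d+1}:I)$ is routine: from $I^d=Q^{d-1}I$ and $JI\subseteq Q$ one gets $JI^d=Q^{d-1}(JI)\subseteq Q^d$ and likewise $JI^{d+1}\subseteq Q^{d+1}$. The reverse containment is where I expect the real work, and I would prove it using that $Q=(a,b)$ is generated by a regular sequence, so that $\gr_Q(R)\cong(R/Q)[X,Y]$ is a polynomial ring, free over $R/Q$ on the monomials in $X,Y$. Given $f\in Q^d$ with $fI\subseteq Q^{d+1}$, the image $\bar f\in[\gr_Q(R)]_d=Q^d/Q^{d+1}$ is annihilated by the image of $I$; since $[\gr_Q(R)]_d$ is free over $R/Q$ and $\operatorname{ann}_{R/Q}(I/Q)=(Q:I)/Q=J/Q$, all coefficients of $\bar f$ lie in $J/Q$, whence $f\in JQ^d+Q^{d+1}$. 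Finally, because $Q\subseteq J$ and $Q\subseteq I$ one has $Q^2\subseteq JI$, so $Q^{d+1}=Q^{d-1}Q^2\subseteq Q^{d-1}(JI)=JI^d$, while also $JQ^d\subseteq JI^d$; together these give $f\in JI^d$, completing the equality.

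The main obstacle is thus the reverse containment $Q^d\cap(Q^{d+1}:I)\subseteq JI^d$; once the associated graded ring of $Q$ is brought in, it dissolves into the two elementary inclusions $JQ^d\subseteq JI^d$ and $Q^{d+1}\subseteq JI^d$. All the remaining steps—finiteness over $T$, the duality $\rmK_{\calR}\cong\Hom_T(\calR,\rmK_T)$, and the identification of $\Hom_T(\calR,T)$ with the conductor-type ideal $T:_L\calR$—are formal.
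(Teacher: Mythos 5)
Your proposal is correct and takes essentially the same route as the paper: both pass through the graded duality $\rmK_{\calR}(1)\cong\Hom_T(\calR,\rmK_T)(1)\cong\Hom_T(\calR,T)\cong T:_{F}\calR$ (with $F$ the common total ring of fractions) and then identify this conductor degree-wise with $J\calR$ via the ideal identity $Q^d\cap(Q^{d+1}:I)=JI^d$, which is the form in which the paper uses $Q^n\cap[Q^{n+1}:I]=Q^n[Q:I]$. The only difference is that the paper asserts this identity without proof, whereas you actually prove it (correctly) by the freeness of $[\gr_Q(R)]_d$ over $R/Q$ coming from $\gr_Q(R)\cong(R/Q)[X,Y]$, so your write-up is if anything more self-contained than the original.
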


\begin{proof}
Since $\calR$ is a module-finite extension of $T$, we get
$$
\rmK_R(1) \cong \Hom_T(\calR, \rmK_T)(1) \cong \Hom_T(\calR, T) \cong T:_F \calR
$$
as graded $\calR$-modules, where $F=\rmQ(T) =\rmQ(\calR)$ is the total ring of fractions. Therefore $T:_F \calR = T:_T It$, since $\calR = T + T{\cdot}It$. Because $Q^n \cap [Q^{n+1}:I] = Q^n[Q:I]$ for all $n \ge 0$, we have $T:_T It = JT$. Hence $T:_F \calR=JT$, so that $JT = J\calR$. Thus  $\rmK_{\calR}(1) \cong J \calR$ as a graded $\calR$-module.
\end{proof}

\begin{cor}\label{3.2}
The ideal $J =Q:I$ in $R$ is  integrally closed, if $\calR$ is a normal ring.
\end{cor}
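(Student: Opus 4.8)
The plan is to read off everything from the isomorphism $\rmK_{\calR}(1) \cong J\calR$ established in Proposition \ref{3.1}, reducing the assertion to the general fact that a divisorial ideal of a Krull domain is integrally closed. First I would note that, being a normal Noetherian domain, $\calR$ is a Krull domain, and that its graded canonical module $\rmK_{\calR}$ is reflexive: it satisfies Serre's condition $(S_2)$ and, since $\calR_{\fkp}$ is a discrete valuation ring for every height-one prime $\fkp$, it is free of rank one in codimension one, whence reflexive. By Proposition \ref{3.1} the ideal $J\calR$ is then isomorphic to a reflexive module, so $J\calR$ is itself a reflexive, equivalently divisorial, ideal of $\calR$.

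The crux is the lemma that a divisorial ideal $\fka$ of a Krull domain $A$ is integrally closed, which I would prove by the valuative criterion. Writing $\mathrm{div}(\fka) = \sum_i n_i \fkp_i$ with the $\fkp_i$ height-one primes, one has $\fka = \bigcap_i \fkp_i^{(n_i)}$ and $v_{\fkp_i}(\fka) = n_i$, where $v_{\fkp_i}$ denotes the valuation of the discrete valuation ring $A_{\fkp_i}$. If $x \in \overline{\fka}$, then $x$ satisfies a relation $x^m + c_1 x^{m-1} + \cdots + c_m = 0$ with $c_j \in \fka^j$; applying $v_{\fkp_i}$ to this relation and comparing the valuations of the terms forces $v_{\fkp_i}(x) \ge n_i$ for every $i$, so that $x \in \bigcap_i \fkp_i^{(n_i)} = \fka$. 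Hence $\overline{\fka} = \fka$, and in particular $J\calR$ is integrally closed in $\calR$.

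Finally I would descend to degree zero. Given $x \in \overline{J}$ with the integral closure taken in $R$, any integral equation of $x$ over $J$ is simultaneously an integral equation of $x$ over $J\calR$, because $J^j \subseteq J^j\calR = (J\calR)^j$; thus $x \in \overline{J\calR} = J\calR$. Since $x$ is homogeneous of degree $0$ and $[J\calR]_0 = J$, we get $x \in J$, so that $\overline{J} = J$ and $J$ is integrally closed. The only step carrying real content is the divisorial-ideal lemma; once that is in hand, the passage from $\calR$ to $R$ is purely formal, the point being that integral dependence over $J$ and over $J\calR$ agree in degree zero.
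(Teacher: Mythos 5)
Your proposal is correct and follows essentially the same route as the paper: deduce from $J\calR \cong \rmK_{\calR}(1)$ that $J\calR$ is a height-one unmixed (equivalently, in the Krull domain $\calR$, divisorial) ideal, conclude it is integrally closed in the normal ring $\calR$, and contract to degree zero via $\overline{J} \subseteq J\calR$ and $[J\calR]_0 = J$. The only difference is expository: you make explicit, via reflexivity of the canonical module and the valuative criterion, the standard facts the paper invokes without proof.
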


\begin{proof}
Since $J\calR \cong \mathrm{K}_\calR(1)$, the ideal $J\calR$ of $\calR$ is unmixed and of height one. Therefore, if $\calR$ is a normal ring, $J\calR$ must be  integrally closed in $\calR$, whence $J$ is integrally closed in $R$ because $\overline{J} \subseteq J\calR$, where $\overline{J}$ denotes the integral closure of $J$.
\end{proof}

Let us give the following criterion for $\calR$ to be a special kind of almost Gorenstein graded rings. Notice that Condition (2) in Theorem \ref{3.3} requires the existence of joint reductions of $\m$, $I$, and $J$ with reduction number zero (cf. \cite{V}).

\begin{thm}\label{3.3} With the same notation as above, set $\fkM = \m\calR + \calR_+$, the graded maximal ideal of $\calR$. Then the following conditions are equivalent.
\begin{enumerate}[$(1)$]
\item There exists an exact sequence
$$
0 \to \calR \to \rmK_{\calR}(1) \to C \to 0
$$
of graded $\calR$-modules such that $\fkM C = (\xi, \eta)C$ for some homogeneous elements $\xi, \eta$ of $\fkM$.
\item There exist elements $f\in \m$, $g \in I$, and $h \in J$ such that
$$
IJ = gJ + I h \ \ \ \text{and} \ \ \ \m J = f J + \m h.
$$
\end{enumerate}
When this is the case, $\calR$ is an almost Gorenstein graded ring. 
\end{thm}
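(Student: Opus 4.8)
The plan is to make everything explicit through the identification $\rmK_{\calR}(1)\cong J\calR$ of Proposition \ref{3.1}, and then to translate the module-theoretic condition (1) into the ideal equations (2) by reading it off degree by degree. First I would note that any degree-preserving embedding $\calR\hookrightarrow\rmK_{\calR}(1)\cong J\calR$ sends $1\in\calR_0$ to a degree-$0$ element $h\in[J\calR]_0=J$, so it is simply multiplication by $h$, and injectivity forces $h$ to be a nonzerodivisor. Hence $C\cong J\calR/h\calR$, and since $[J\calR]_n=JI^n$ and $[h\calR]_n=hI^n$, the graded pieces are $[C]_n=JI^n/hI^n$. A short computation using $I\cdot JI^{n-1}=JI^n$ shows that $\calR_+$ already carries $[C]_{n-1}$ onto $[C]_n$, so $C$ is generated in degree $0$ and $C/\fkM C$ is concentrated in degree $0$, where it equals $J/(\m J+hR)$. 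From the exact sequence $0\to\calR\xrightarrow{\,h\,}J\calR\to C\to0$, and since both $\calR$ and $\rmK_{\calR}(1)$ are maximal Cohen-Macaulay of dimension $3$, a depth count gives $\depth C\ge2$; as $h$ is a nonzerodivisor and $J\calR$ has rank one, $\dim C\le2$, so (when $C\ne0$) $C$ is Cohen-Macaulay of dimension $2$ and $[C]_n\ne0$ for every $n\ge0$.

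For (2)$\Rightarrow$(1) I would take $\xi=f$ in degree $0$ and $\eta=gt$ in degree $1$ and verify $\fkM C=(\xi,\eta)C$ in each degree. In degrees $n\ge1$ the identity $[\fkM C]_n=[C]_n$ reduces, after multiplying $IJ=gJ+Ih$ by $I^{n-1}$, to $JI^n=gJI^{n-1}+hI^n\subseteq fJI^n+gJI^{n-1}+hI^n=[(\xi,\eta)C]_n$; in degree $0$ the equation $\m J=fJ+\m h$ gives $\m J+hR=fJ+hR$, which is exactly $[\fkM C]_0=[(\xi,\eta)C]_0$. The converse (1)$\Rightarrow$(2) is where the real work lies: given homogeneous $\xi,\eta$ with $\fkM C=(\xi,\eta)C$, I would first pin down their degrees. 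Matching degree $0$ forces one of them, say $\xi=f$, to lie in degree $0$ (that is, $f\in\m$), and a Nakayama argument applied to the nonzero pieces $[C]_1,\dots,[C]_{q-1}$ then forces the other to have degree exactly $1$, say $\eta=gt$ with $g\in I$. Reading the degree-$1$ identity and applying Nakayama to $JI/(gJ+hI)$ yields $IJ=gJ+Ih$, and reading degree $0$ yields $\m J+hR=fJ+hR$.

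The hard part will be converting this last degree-$0$ identity into the sharper statement $\m J=fJ+\m h$ demanded by (2): this requires $hR\cap\m J=\m h$, equivalently that $h$ be a minimal generator of $J$ ($h\notin\m J$). I would establish $h\notin\m J$ directly from $\fkM C=(\xi,\eta)C$, arguing that if $h\in\m J$ the derived identity $JI=gJ+hI$ would collapse to $JI=gJ$ by Nakayama, which is incompatible with the module structure; once the embedding is minimal, the clean equation follows from the colon computation above. Finally, for the concluding assertion I would note that $\fkM C=(\xi,\eta)C$ propagates to $\fkM^nC=(\xi,\eta)^nC$ for all $n$, so that $(\xi,\eta)$ is a reduction of $\fkM$ on $C$ with reduction number $0$ and a system of parameters for the Cohen-Macaulay module $C$; hence $\mu_{\calR}(C)=\ell(C/\fkM C)=\ell(C/(\xi,\eta)C)=\e_{(\xi,\eta)}^0(C)=\e_{\fkM}^0(C)$. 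Since $\rma(\calR)=-1$, the sequence $0\to\calR\to\rmK_{\calR}(-\rma(\calR))\to C\to0$ together with $\mu_{\calR}(C)=\e_{\fkM}^0(C)$ is precisely what Definition \ref{1.2} requires, so $\calR$ is an almost Gorenstein graded ring.
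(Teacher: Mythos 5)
Your direction $(1)\Rightarrow(2)$ is essentially the paper's own argument --- the same degree-by-degree reading of $\fkM C=(\xi,\eta)C$, the same Nakayama steps pinning $(\deg\xi,\deg\eta)$ down to $(0,1)$, and the same colon manipulation sharpening $\m J+hR=fJ+hR$ to $\m J=fJ+\m h$ --- with two genuine improvements: the observation that $[C]_n\neq(0)$ for all $n\ge 0$ (via the depth count) streamlines the case analysis, and your direct proof that $h\notin\m J$ (the collapse $JI=gJ+hI\rightsquigarrow JI=gJ$ would make $(g)$ a principal reduction of the $\m$-primary ideal $I$, impossible in dimension two by Krull's height theorem) replaces the paper's citation of \cite[Corollary 3.10]{GTT}. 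But there is one genuine gap, in $(2)\Rightarrow(1)$: you never establish exactness at the left. Condition (1) demands $0\to\calR\to\rmK_{\calR}(1)$, i.e., that multiplication by $h$ is injective, i.e., that $h$ is a nonzerodivisor --- and in this section $R$ is only assumed Gorenstein, not a domain, so $h\in J$ may a priori be a zerodivisor. Your opening depth/dimension analysis of $C$ (and hence the Cohen--Macaulayness of $C$ invoked in your closing multiplicity computation $\mu_\calR(C)=\e^0_\fkM(C)$) presupposes this exactness, so the gap propagates into the concluding almost Gorenstein assertion as well. The paper closes exactly this hole with \cite[Lemma 3.1]{GTT}: from $\fkM C=(f,gt)C$ one gets $\dim_{\calR_\fkM}C_\fkM\le 2<3=\dim\calR_\fkM$, so $C$ vanishes at every minimal prime $\p$ of $\calR$; then $\calR_\p\to (J\calR)_\p\cong\rmK_{\calR_\p}$ is a surjection of modules of equal length, hence bijective, and since $\Ass\calR=\Min\calR$ the kernel of $\varphi$ must vanish. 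You need to cite that lemma or reproduce this localization-and-length argument.

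Two smaller points in $(1)\Rightarrow(2)$. Your argument tacitly assumes $C\neq(0)$; when $\calR$ is Gorenstein the sequence in (1) holds with $C=(0)$ and arbitrary $\xi,\eta$, so their degrees are not constrained and you must produce $f,g,h$ separately (this is easy in your framework: $C=(0)$ gives $J\calR=h\calR$, hence $J=(h)$, and $f=g=0$ works; the paper instead notes $\mu_R(J)=1$ forces $J=R$ and takes $h=1$). Also, "matching degree $0$" does not by itself exclude $\min\{\deg\xi,\deg\eta\}>0$: in that case one only obtains $\m C_0=(0)$, i.e., $\m J\subseteq hR$, and one must still argue that $\m J$ is $\m$-primary, so $\sqrt{(h)}\supseteq\m$, which by Krull forces $h$ to be a unit and hence $C=(0)$, a contradiction. (Finally, the bound $q$ in "$[C]_1,\dots,[C]_{q-1}$" is never defined --- presumably $q=\deg\eta$ --- though only the degree-$1$ piece is actually needed.) All of these are repairable within your approach, but as written the proof of $(2)\Rightarrow(1)$ is incomplete.
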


\begin{proof}
$(2) \Rightarrow (1)$  Notice that $\fkM{\cdot}J\calR \subseteq (f, gt){\cdot}J\calR + \calR h$, since $IJ = gJ + I h$ and $\m J = f J + \m h$. Consider the exact sequence
$$
\calR \overset{\varphi}{\to} J\calR \to C \to 0
$$
of graded $\calR$-modules where $\varphi(1) = h$. We then have $\fkM C = (f, gt)C$, so that $\dim_{\calR_{\fkM}}C_{\fkM} \le 2$. Hence by \cite[Lemma 3.1]{GTT} the homomorphism $\varphi$ is injective and $\calR$ is an almost Gorenstein graded ring.

$(1) \Rightarrow (2)$  Suppose that $\calR$ is a Gorenstein ring. Then  $\mu_R(J) = 1$, since $\mathrm{K}_\calR(1)\cong J\calR$. Hence $J = R$ as $\m \subseteq \sqrt{J}$, so that choosing $h=1$ and $f = g= 0$, we get  $IJ = gJ + I h$ and $\m J = f J + \m h$.

Suppose that $\calR$ is not a Gorenstein ring and consider the exact sequence  
$$
0 \to \calR \overset{\varphi}{\to} J\calR \to C \to 0
$$
of graded $\calR$-modules with $C \ne (0)$ and $\fkM C = (\xi, \eta)C$ for some homogeneous elements $\xi, \eta$ of $\fkM$. Hence  $\calR_{\fkM}$ is an almost Gorenstein local ring in the sense  of \cite[Definition 3.3]{GTT}. We set $h = \varphi(1) \in J$, $m=\deg \xi$, and $n=\deg \eta$; hence $C = J\calR/\calR h$. Remember that  $h \not\in \m J$, since $\calR_\fkM$ is not a regular local ring (see \cite[Corollary 3.10]{GTT}). If $\min\{m , n\} > 0$, then $\fkM C \subseteq \calR_+ C$, whence $\m C_0 = (0)$  (notice that $[\calR_+ C]_0 = (0)$, as $C=\calR C_0$). Therefore $\m J \subseteq (h)$, so that we have $J =(h)=R$. Thus $\calR h = J \calR$ and  $\calR$ is a Gorenstein ring, which is impossible. Assume $m=0$. If $n=0$, then $\fkM C = \m C$ since $\xi, \eta \in \m$, so that 
$$
C_1 \subseteq \calR_+ C_0 \subseteq \m C
$$
and therefore $C_1 = (0)$ by  Nakayama's lemma. Hence $IJ = Ih$  as $[J\calR]_1 = \varphi (\calR_1)$, which shows $(h)$ is a reduction of $J$, so that $(h) = R=J$. Therefore $\calR$ is a Gorenstein ring, which is impossible. If $n \ge 2$, then because  
$$\fkM{\cdot}J\calR \subseteq \xi{\cdot}J\calR + \eta{\cdot}J\calR  + \calR h,$$
we get $IJ \subseteq \xi IJ + Ih$, whence $IJ = Ih$. This is impossible as we have shown above.  Hence  $n =1$. Let us  write $\eta = gt$ with $g \in I$ and  take $f = \xi$. We then have
$$
\fkM{\cdot}J\calR \subseteq (f, gt){\cdot}J\calR + \calR h,
$$
whence $\m J \subseteq fJ + R h$. Because  $h \not\in \m J$, we get $\m J \subseteq fJ + \m h$, so that $\m J = fJ+\m h$, while $IJ =g J + I h$, because $IJ \subseteq fIJ + gJ + Ih$. This completes the proof of  Theorem \ref{3.3}.
\end{proof}

Let us explore two examples to show how Theorem \ref{3.3} works.

\begin{ex}
Let $S = k[[x, y, z]]$ be the formal power series ring over an infinite field $k$. Let $\n = (x,y,z)$ and choose $f \in \n^2 \setminus \n^3$. We set $R = S/(f)$ and $\m = \n/(f)$. Then  for every  integer $\ell > 0$ the Rees algebra $\calR (\m^\ell)$ of $\m^\ell$ is an almost Gorenstein graded ring and $\mathrm{r}(\calR) = 2\ell + 1$, where $\mathrm{r}(\calR)$ denotes the Cohen-Macaulay type of $\calR$.
\end{ex}

\begin{proof} Since $\e^0_\m(R) = 2$, we have $\m^2 = (a,b)\m$ for some  elements $a, b \in \m$. Let $\ell > 0$ be an integer and set $I = \m^\ell$ and $Q = (a^\ell, b^\ell)$. We then have $I^2 =QI$  and $Q:I = I$, so that $\calR = \calR (I)$ is a Cohen-Macaulay ring  and   $\mathrm{K}_\calR(1) \cong I\calR$  by Proposition \ref{3.1}, whence $\mathrm{r}(\calR) = \mu_R(I) = 2\ell + 1$. Because $\m^{\ell +1} = a\m^\ell + b^\ell \m$ and $Q:I = I = \m^\ell$,  by Theorem \ref{3.3}  $\calR$ is an almost Gorenstein graded ring.
\end{proof}

\begin{ex}
Let $(R,\m)$ be a two-dimensional regular local ring with $\m = (x,y)$. Let $1 \le m \le n$ be integers and set $I = (x^m)+\m^n$. Then $\calR(I)$ is an almost Gorenstein graded ring. 
\end{ex}

\begin{proof} We may assume $m > 1$. 
We set $Q = (x^m,y^n)$ and $J = Q:I$. Then $Q \subseteq I$ and $I^2 = QI$. Since $I = (x^m) +(x^iy^{n-i} \mid 0 \le i \le m-1)$, we get 
\begin{eqnarray*}
J = Q : (x^iy^{n-i} \mid 0 \le i \le m-1)&=& \bigcap_{i=1}^{m-1}\left[(x^m,y^n):x^iy^{n-i}\right] \\
&=& \bigcap_{i=1}^{m-1}(x^{m-i},y^i)\\
&=&\m^{m-1}.
\end{eqnarray*}
Take $f = x \in \m$, $g = x^m \in I$, and $h = y^{m-1}\in J=\m^{m-1}$. We then have $\m J = fJ + \m h$ and $IJ = Ih + gJ$, so that by Theorem \ref{3.3} $\calR (I)$ is an almost Gorenstein graded ring.
\end{proof}

To prove Theorem 1.1 we need a result of J. Verma \cite{V} about joint reductions of integrally closed ideals. Let $(R,\m)$ be a Noetherian local ring. Let $I$ and $J$ be ideals of $R$ and let $a \in I$ and $b \in J$. Then we say that $a, b$ are a {\it joint reduction} of $I, J$ if $aJ + Ib$ is a reduction of $IJ$. Joint reductions always exist (see, e.g., \cite{HS}), if the residue class field of $R$ is infinite. We furthermore have the following.

\begin{thm}[{\cite[Theorem 2.1]{V}}]\label{verma}
Let $(R,\m)$ be a two-dimensional regular local ring. Let $I$ and $J$ be $\m$-primary ideals of $R$. Assume that $a, b$ are a joint reduction of $I$, $J$. Then $IJ = aJ + Ib$, if $I$ and $J$ are integrally closed.
\end{thm}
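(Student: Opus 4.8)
The plan is to deduce the equality $IJ = aJ + Ib$ from a comparison of colengths. Writing $L = aJ + Ib$, the joint reduction hypothesis says precisely that $L$ is a reduction of $IJ$; in particular $L$ is $\m$-primary, and since $L \subseteq (a,b)$ the ideal $(a,b)$ is $\m$-primary as well. As $R$ is a two-dimensional regular, hence Cohen-Macaulay, local ring, $a,b$ is therefore a regular sequence and $(a,b)$ is a parameter ideal. Because $L \subseteq IJ$ holds automatically, it suffices to prove the single numerical identity $\lambda_R(R/L) = \lambda_R(R/IJ)$, where $\lambda_R$ denotes length; this forces $\lambda_R(IJ/L)=0$ and hence $L = IJ$. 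I would obtain the two sides from separate formulas and match them through the mixed multiplicity $\mathrm{e}(I\mid J)$.

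First I would compute the left-hand side by an elementary argument using only $a\in I$, $b\in J$, and the regular sequence property. There is a natural surjection $R/J \oplus R/I \to (a,b)/L$ sending $(\overline{u},\overline{v})$ to the class of $au + bv$; it is well defined since $aJ \subseteq L$ and $bI \subseteq L$. A short chase shows it is in fact an isomorphism: given a relation $au + bv \in aJ + bI$, one cancels using that $a,b$ is a regular sequence to write $u - j = bc$ and $v = i - ac$ with $j \in J$, $i \in I$, and then $a \in I$, $b \in J$ force $\overline{u} = 0$ in $R/J$ and $\overline{v} = 0$ in $R/I$. Thus $(a,b)/L \cong R/I \oplus R/J$, and the filtration $L \subseteq (a,b) \subseteq R$ gives $\lambda_R(R/L) = \lambda_R(R/(a,b)) + \lambda_R(R/I) + \lambda_R(R/J)$. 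Note that this step needs neither the joint reduction hypothesis nor integral closedness.

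Next I would bring in the two external inputs. By Rees's theorem relating joint reductions to mixed multiplicities in a Cohen-Macaulay local ring (see \cite{HS}), the parameter ideal $(a,b)$ coming from a joint reduction satisfies $\lambda_R(R/(a,b)) = \mathrm{e}(I\mid J)$. On the other hand, for the product of two integrally closed $\m$-primary ideals in a two-dimensional regular local ring I claim the length formula $\lambda_R(R/IJ) = \lambda_R(R/I) + \lambda_R(R/J) + \mathrm{e}(I\mid J)$. I would derive this from the Hoskin-Deligne length formula $\lambda_R(R/K) = \sum_x \binom{\mathrm{ord}_x(K)+1}{2}[k(x):k]$ for complete $K$, combined with the additivity $\mathrm{ord}_x(IJ) = \mathrm{ord}_x(I) + \mathrm{ord}_x(J)$, the numerical identity $\binom{p+q+1}{2} = \binom{p+1}{2} + \binom{q+1}{2} + pq$, and the expression $\mathrm{e}(I\mid J) = \sum_x \mathrm{ord}_x(I)\,\mathrm{ord}_x(J)\,[k(x):k]$ of the mixed multiplicity over the points $x$ infinitely near to $\m$. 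Combining the three displayed formulas yields $\lambda_R(R/L) = \mathrm{e}(I\mid J) + \lambda_R(R/I) + \lambda_R(R/J) = \lambda_R(R/IJ)$, and with $L \subseteq IJ$ this completes the argument.

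The elementary colength computation and Rees's formula hold for arbitrary $\m$-primary ideals, so the \emph{only} place the hypothesis ``$I$ and $J$ integrally closed'' is used — and the step I expect to be the main obstacle — is the product length formula $\lambda_R(R/IJ) = \lambda_R(R/I) + \lambda_R(R/J) + \mathrm{e}(I\mid J)$. Its justification rests on the full Zariski-Lipman theory of complete ideals in two-dimensional regular local rings, in particular Zariski's theorem that a product of integrally closed ideals is again integrally closed (see \cite{HS}), which is what makes $\mathrm{ord}_x$ additive on products and what licenses the Hoskin-Deligne formula for $IJ$. This is precisely the input that fails for non-integrally-closed ideals, where the joint reduction number can be positive.
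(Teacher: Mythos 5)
Your argument is correct, and it is worth noting that the paper itself offers no proof of this statement at all --- it is imported verbatim from Verma \cite{V} as an external input --- so the only meaningful comparison is with Verma's original argument, which your proposal essentially reconstructs: both proofs reduce the equality $IJ = aJ+Ib$ to the single length identity $\ell_R(R/(aJ+Ib)) = \ell_R(R/IJ)$ and verify it through the mixed multiplicity $\e(I \mid J)$. Your three ingredients all check out. The regular-sequence chase proving $(a,b)/(aJ+Ib) \cong R/I \oplus R/J$ is exactly right (from $a(u-j)=b(i-v)$ one gets $u-j \in (b):a = (b)$, and it is precisely here that the hypotheses $a \in I$, $b \in J$ enter); Rees's theorem gives $\e(I\mid J) = \e((a,b)) = \ell_R(R/(a,b))$, the second equality because $(a,b)$ is a parameter ideal in a Cohen--Macaulay ring; and the Hoskin--Deligne computation of $\ell_R(R/IJ) - \ell_R(R/I) - \ell_R(R/J)$ via the identity $\binom{p+q+1}{2} = \binom{p+1}{2}+\binom{q+1}{2}+pq$ is the standard route to the product length formula for complete ideals.

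One small misattribution in your closing paragraph: additivity of $\mathrm{ord}_x$ on products does \emph{not} depend on Zariski's product theorem. The order function of a regular local ring is a valuation, so $\mathrm{ord}(IJ) = \mathrm{ord}(I) + \mathrm{ord}(J)$ for arbitrary ideals, and transforms are multiplicative for the same reason; no completeness is involved. Where Zariski's theorem (that a product of complete ideals in a two-dimensional regular local ring is complete) is genuinely needed is in applying the Hoskin--Deligne formula to $IJ$ itself, since that formula computes $\ell_R(R/K)$ only for complete $K$. So your localization of where integral closedness is used is right in spirit --- it is indeed the product length formula, and this is exactly the step that fails for non-complete ideals --- but the precise point of failure is the completeness of $IJ$ (and of the powers $I^mJ^n$ if one derives the mixed multiplicity formula the same way), not the additivity of orders.
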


We are now ready to prove Theorem 1.1.

\begin{proof}[Proof of Theorem 1.1.]  
Let $(R,\m)$ be a two-dimensional regular local ring with infinite residue class field and let $I$ be an $\m$-primary integrally closed ideal in $R$. We choose a parameter ideal $Q$ of $R$ so that $Q \subseteq I$ and $I^2 = QI$ (this choice is possible; see \cite[Appendix 5]{ZS} or \cite{H}). Therefore the Rees algebra $\calR = \calR (I)$ is a Cohen-Macaulay ring (\cite{GS}). Because  $\calR$ is a normal ring (\cite{ZS}), by Corollary \ref{3.2} $J = Q:I$ is an integrally closed ideal in $R$. Consequently, choosing three elements $f \in \m$, $g \in I$, and $h \in J$ so that $f, h$ are a joint reduction of  $\m, J$ and $g, h$ are a joint reduction of $I, J$, we readily get by Theorem \ref{verma} the equalities
$$\m J = fJ + \m g \ \ \text{and} \ \ IJ = gJ + Ih$$
stated in Condition (2) of Theorem \ref{3.3}. Thus $\calR = \calR (I)$ is an almost Gorenstein graded ring.
\end{proof}

We now explore the almost Gorenstein property of the Rees algebras of modules. To state the result we need additional notation. For the rest of this section let $(R, \m)$ be a two-dimensional regular local ring  with infinite residue class field. Let  $M \neq (0)$ be a finitely generated torsion-free $R$-module and assume that $M$ is non-free. Let $(-)^* = \Hom_R(-, R)$. Then $F=M^{**}$ is a finitely generated free $R$-module and we get a canonical exact sequence
$$
0 \to M \overset{\varphi}{\to} F \to C \to 0
$$
of $R$-modules with  $C\neq (0)$ and $\ell_R(C) < \infty$. Let $\operatorname{Sym}(M)$ and $\operatorname{Sym} (F)$ denote the symmetric algebras of $M$ and $F$ respectively and let $\operatorname{Sym} (\varphi) : \operatorname{Sym} (M) \to \operatorname{Sym} (F)$ be the homomorphism induced from $\varphi : M \to F$. 
Then the Rees algebra $\calR(M)$ of $M$ is defined by 
$$
\calR(M) = \Im \left[\Sym(M) \overset{\Sym (\varphi)}{\longrightarrow} \Sym(F)\right]
$$
(\cite{SUV}). Hence $\calR (M) = \Sym (M)/T$ where $T=t(\Sym (M))$ denotes the torsion part of $\Sym (M)$, so that $M = [\calR (M)]_1$ is an $R$-submodule of $\calR (M)$. 
 Let $x \in F$. Then we say that $x$ is integral over $M$, if it satisfies an integral equation 
$$x^n + c_1x^{n-1} + \cdots + c_n = 0$$
in the symmetric algebra $\Sym (F)$ with $n >0$ and $c_i \in M^i$ for each $1 \le i \le n$. Let 
$\overline{M}$ be the set of elements of $F$ which are integral over $M$. Then $\overline{M}$ forms an $R$-submodule of $F$, which is called the integral closure of $M$. We say that $M$ is {\it integrally closed}, if $\overline{M} = M$.

With this notation we have the following.

\begin{cor}\label{3.6} Let $\fkM = \m \calR(M) + \calR(M)_+$ be the unique graded maximal ideal of $\calR(M)$ and suppose that $M$ is integrally closed. Then $\calR(M)_\fkM$ is an almost Gorenstein local ring in the sense of \cite{GTT}.
\end{cor}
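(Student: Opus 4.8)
The plan is to deduce the module statement from the ideal case, Theorem \ref{3.4}, by passing to a \emph{generic Bourbaki ideal}, and then to transfer the almost Gorenstein property back up along a regular sequence of linear forms.

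First I would strip away the rank of $M$ by a faithfully flat base change. Put $e = \rank_R M = \rank_R F$ and fix a minimal system of generators $m_1, \dots, m_n$ of $M$. Adjoining indeterminates $Z = (z_{ij})$ with $1 \le i \le e-1$ and $1 \le j \le n$, set $R' = R(Z)$ and $M' = M \otimes_R R'$; then $R'$ is again a two-dimensional regular local ring with infinite residue field, $\calR(M') = \calR(M) \otimes_R R'$, and $\calR(M)_\fkM \to \calR(M')_{\fkM'}$ is flat local with field closed fibre. As the almost Gorenstein property is stable along such an extension, it suffices to prove that $\calR(M')_{\fkM'}$ is almost Gorenstein. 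Setting $x_i = \sum_{j} z_{ij}m_j \in M'$ and $F_0 = \sum_{i=1}^{e-1} R'x_i$, the theory of generic Bourbaki ideals (\cite{SUV}) identifies $I := M'/F_0$ with an ideal of $R'$; since $R'/I \cong F'/M' = C \otimes_R R'$ has finite length, $I$ is $\m R'$-primary.

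Next I would feed $I$ into Theorem \ref{3.4}. Because $M$, and hence $M'$, is integrally closed, and integral closure is compatible with the Bourbaki quotient, $I$ is an integrally closed $\m R'$-primary ideal; by Theorem \ref{3.4} the ring $\calR(I)$ is an almost Gorenstein graded ring, so its localization at the graded maximal ideal is almost Gorenstein local by the graded-to-local comparison of \cite{GTT}. In particular $\calR(I)$ is Cohen-Macaulay (\cite{GS}), whence by \cite{SUV} the algebra $\calR(M')$ is Cohen-Macaulay, the sequence $x_1, \dots, x_{e-1}$ is $\calR(M')$-regular, and $\calR(M')/(x_1, \dots, x_{e-1})\calR(M') \cong \calR(I)$.

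It then remains to ascend the almost Gorenstein property from $\calR(I)$ to $A := \calR(M')_{\fkM'}$ along $\underline{x} = x_1, \dots, x_{e-1}$, reducing one element at a time, and I expect this to be the crux. Each $x_i$ is a degree-one form lying in $\fkM' \setminus \fkM'^2$ (since $[\fkM'^2]_1 = \m M'$, whereas $x_i$ is a generic combination of minimal generators of $M'$), $A$ is a Cohen-Macaulay normal domain, and $\rmK_{A/fA} \cong \rmK_A/f\rmK_A$ for a non-zerodivisor $f$. Thus a nonzero embedding $\calR(I) \hookrightarrow \rmK_{\calR(I)}(a)$ should lift to a map $A \to \rmK_A(a)$, injective because $\rmK_A$ is torsion-free over the domain $A$, whose cokernel $C_A$ satisfies $C_A/fC_A \cong C_{A/fA}$; genericity then lets me take each $f$ a superficial non-zerodivisor on $C_A$, so that both $\mu$ and the multiplicity are preserved and the Ulrich equality $\mu(C_A) = \rme^0_{\fkM'}(C_A)$ survives the reduction. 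Descending once more along $\calR(M)_\fkM \to \calR(M')_{\fkM'}$ would finish the proof. Since deformation of almost Gorensteinness is delicate in general, this step genuinely uses the linear nature of the Bourbaki elements, the normality of $\calR(M)$, and the superficiality afforded by genericity so that the Ulrich cokernel deforms; the one other point I would treat carefully is the compatibility of integral closure with the generic Bourbaki construction, which is exactly what carries the hypothesis on $M$ into Theorem \ref{3.4}.
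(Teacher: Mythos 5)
Your proposal is correct and follows essentially the same route as the paper: a generic flat base change, the generic Bourbaki ideal of \cite{SUV} to reduce to an $\fkn$-primary ideal, Theorem \ref{3.4} for that ideal, lifting along the regular sequence of Bourbaki elements, and flat descent back to $\calR(M)_\fkM$. The two points you flag are exactly where the paper cites known results rather than arguing by hand: the compatibility of integral closure with the generic Bourbaki construction is \cite[Theorem 3.6]{HongU}, and your deformation argument for ascending the almost Gorenstein property (where, incidentally, superficiality is not needed -- once $f$ is a non-zerodivisor on the cokernel, its Cohen--Macaulayness gives $\mu(C_A)\le \e^0(C_A)$ and the reverse inequality comes from $\e^0(C_A)\le \e^0(C_A/fC_A)=\mu(C_A)$) is precisely \cite[Theorem 3.7 (1)]{GTT}, with \cite[Theorem 3.9]{GTT} handling the final descent.
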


\begin{proof} Let $U = R[x_1,x_2, \ldots, x_n]$ be the polynomial ring with sufficiently large $n > 0$ and set $S = U_{\m U}$. We denote by $\n$ the maximal ideal of $S$. Then thanks to \cite[Theorem 3.5]{SUV} and \cite[Theorem 3.6]{HongU}, we can find some elements $f_1, f_2, \ldots, f_{r-1} \in S \otimes_R M$ ($r = \rank_RF$) and an $\n$-primary integrally closed ideal $I$ in $S$, so that $f_1, f_2, \ldots, f_{r-1}$ form a regular sequence in $\calR(S \otimes_R M)$ and
$$
\calR(S \otimes_R M)/(f_1, f_2, \ldots, f_{r - 1}) \cong \calR(I)
$$
as a graded $S$-algebra. Therefore, because  $\calR(I)$ is an almost Gorenstein graded  ring by Theorem 1.1, $S \otimes_R\calR (M) = \calR(S \otimes_R M)$ is an almost Gorenstein graded ring (cf. \cite[Theorem 3.7 (1)]{GTT}). Consequently $\calR(M)_\fkM$ is an almost Gorenstein local ring  by  \cite[Theorem 3.9]{GTT}.
\end{proof}

\section{Almost Gorenstein property in Rees algebras of ideals with linear presentation matrices}

Let $R=k[[x_1, x_2, \ldots, x_d]]~(d\ge 2)$ be the formal power series ring over an infinite field $k$. Let $I$ be a perfect ideal of $R$ with $\operatorname{grade}_R I = 2$,  possessing a linear presentation matrix $\varphi$
$$\ \ \ \ \ 0\to R^{\oplus (n-1)} \overset{\varphi}{\to} R^{\oplus n} \to I \to 0,$$
that is each entry of the matrix $\varphi$ is contained in $\sum_{i=1}^dkx_i$. 
We set $n = \mu_R(I)$ and $\m = (x_1, x_2, \ldots, x_d)$; hence $I = \m^{n-1}$ if $d=2$. In what follows we assume that $n > d$ and that our ideal $I$ satisfies the condition $({\rm G}_d)$ of \cite{AN}, that is $\mu_{R_\fkp} (IR_\fkp) \le \dim R_\fkp$ for every $\p \in \mathrm{V}(I) \setminus \{\m\}$. Then thanks to \cite[Theorem 1.3]{MU} and \cite[Proposition 2.3]{HU}, the Rees algebra $\calR = \calR(I)$ of $I$ is a Cohen-Macaulay ring with $\rma (\calR) = -1$ and $$\rmK_{\calR}(1) \cong \m^{n-d}\calR$$ as a graded $\calR$-module.

We are interested in the question of when $\calR$ is an almost Gorenstein ring. Our answer is the following, which suggests almost Gorenstein Rees algebras might be rare in dimension greater than two.

\begin{thm}\label{4.1}  Let $\fkM$ be the graded maximal ideal of $\calR$.
Then $\calR_{\fkM}$ is an almost Gorenstein local ring if and only if $d=2$.
\end{thm}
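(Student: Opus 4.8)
The plan is to prove the two implications separately, the substantial one being that $\calR_{\fkM}$ almost Gorenstein forces $d=2$. The reverse implication is immediate: when $d=2$ we have $I=\m^{n-1}$, which is an $\m$-primary integrally closed ideal, so Theorem \ref{3.4} shows that $\calR(I)$ is an almost Gorenstein graded ring, and hence $\calR_{\fkM}$ is an almost Gorenstein local ring.

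For the forward implication, suppose $\calR_{\fkM}$ is almost Gorenstein. First I would record that $\calR$ is not Gorenstein: since $n>d$, the canonical module $\rmK_{\calR}\cong\m^{n-d}\calR(-1)$ has $\mu_{\calR}(\rmK_{\calR})=\mu_R(\m^{n-d})=\binom{n-1}{d-1}\ge d\ge 2$, so it is not cyclic. Feeding the isomorphism $\rmK_{\calR}(1)\cong\m^{n-d}\calR$ into Definition \ref{1.2}, the almost Gorenstein property provides a short exact sequence
$$
0\to\calR\overset{\cdot h}{\longrightarrow}\m^{n-d}\calR\to C\to 0
$$
of graded $\calR$-modules with $C\neq(0)$ an Ulrich module; the maps being homogeneous of degree $0$, the image $h$ of $1$ lies in $[\m^{n-d}\calR]_0=\m^{n-d}$, and we may normalise $h$ to be a minimal generator of $\m^{n-d}$. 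Thus $C$ is generated in degree $0$ with $[C]_0=\m^{n-d}/hR$ and $\mu_{\calR}(C)=\binom{n-1}{d-1}-1$. Since $\depth\m^{n-d}\calR=\depth\calR=d+1$, the depth lemma applied to the sequence gives $\depth_{\calR}C=\dim_{\calR}C=d$, so $C$ is a Cohen-Macaulay $\calR_{\fkM}$-module, and one has $\mu_{\calR}(C)\le\e_{\fkM}^0(C)$ with equality exactly when $C$ is Ulrich. The problem is thereby reduced to the single numerical equality $\e_{\fkM}^0(C)=\mu_{\calR}(C)$, which I must show fails for $d\ge 3$.

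The crux is this multiplicity computation, and the phenomenon driving it already appears in degree $0$. From the exact sequence $0\to hR\to\m^{n-d}\to[C]_0\to 0$ of $R$-modules, together with $\depth_R hR=d$ and $\depth_R\m^{n-d}=1$, the depth lemma yields $\depth_R[C]_0=1$, whereas $\dim_R[C]_0=d-1$; hence $[C]_0$ is Cohen-Macaulay precisely when $d=2$. To exploit this I would cut $C$ down to dimension one by general degree-$0$ elements $w_1,\dots,w_{d-1}\in\m$ that are simultaneously $C$-regular and part of a minimal reduction of $\fkM$, so that $C$ is Ulrich over $\calR_{\fkM}$ if and only if $C/(w_1,\dots,w_{d-1})C$ is Ulrich over $\calR/(w_1,\dots,w_{d-1})\calR$, the invariants $\mu$ and $\e_{\fkM}^0$ being unchanged; the degree-$0$ component of the defining Ulrich equation $\fkM C=(z_1,\dots,z_d)C$ forces $w_1,\dots,w_{d-1}$ to be a system of parameters on $[C]_0$ with $\m[C]_0=(w_1,\dots,w_{d-1})[C]_0$. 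I expect the main obstacle to be precisely this last step: converting the failure of Cohen-Macaulayness of $[C]_0$ for $d\ge 3$ into the strict inequality $\mu_{\calR}(C)<\e_{\fkM}^0(C)$, via a Hilbert-function comparison (equivalently, comparing the colength $\ell(C/\q C)$ of a minimal reduction $\q$ of $\fkM$ on the Cohen-Macaulay module $C$ with $\ell(C/\fkM C)$), and carrying it out uniformly for every linearly presented ideal $I$ rather than only for $I=\m^{n-1}$.
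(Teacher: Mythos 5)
There is a genuine gap, and it occurs at the very first step of your forward implication. The theorem's hypothesis is that the \emph{local} ring $\calR_\fkM$ is almost Gorenstein, which supplies only an exact sequence $0\to A\to \rmK_A\to C\to 0$ of modules over $A=\calR_\fkM$, with no grading in sight; Definition \ref{1.2}, which you invoke, defines the \emph{graded} property, and that is strictly stronger (graded implies local at $\fkM$, but not conversely --- a distinction this very paper exploits in Section 4). Your entire setup --- the homogeneous degree-zero embedding $\calR\to\m^{n-d}\calR$, the normalization of $h$ to a minimal generator of $\m^{n-d}$, the identification $[C]_0=\m^{n-d}/hR$ and the depth count on it --- presupposes that the embedding into the canonical module can be chosen graded, which does not follow from the local hypothesis. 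Even if your sketch were completed, it would only show that $\calR$ fails to be an almost Gorenstein \emph{graded} ring for $d\ge 3$, not the stated assertion about $\calR_\fkM$. The paper is careful on exactly this point: it works throughout with the ungraded sequence over $A$ and with $f=\phi(1)\notin \n\rmK_A$.

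Separately, the decisive inequality $\mu_\calR(C)<\e_\fkM^0(C)$ for $d\ge 3$ is not proved in your proposal but explicitly deferred, and the proposed mechanism is doubtful: in the Ulrich equality $\n C=(z_1,\dots,z_d)C$ the elements $z_i$ need not be homogeneous, so no degree-zero consequence such as $\m[C]_0=(w_1,\dots,w_{d-1})[C]_0$ can be extracted, and non-Cohen--Macaulayness of the slice $[C]_0$ does not by itself obstruct the Ulrich property of the $d$-dimensional Cohen--Macaulay module $C$. The paper's contradiction comes from a different count, in which the linearity of $\varphi$ --- which your sketch acknowledges but never uses --- is essential. From $0\to \n f\to \n\rmK_A\to \n C\to 0$, $\mu_A(\n)=d+n$, and $\n C=(f_1,\dots,f_d)C$ (a consequence of Ulrichness, \cite[Proposition 2.2]{GTT}), one gets the upper bound $\mu_\calR(\fkM\rmK_\calR)\le (d+n)+d\left(\tbinom{n-1}{d-1}-1\right)$, whereas $\mu_\calR(\fkM\rmK_\calR)=\tbinom{n}{d-1}+\mu_R(\m^{n-d}I)$; the linear presentation $(*)$ shows the kernel of $\m^{n-d}\otimes_R I\to \m^{n-d}I$ is an image of $[\m^{n-d-1}]^{\oplus(n-1)}$, giving the lower bound $\mu_R(\m^{n-d}I)\ge n\tbinom{n-1}{d-1}-(n-1)\tbinom{n-2}{d-1}$. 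Combining collapses, via the identity $(n-d)\tbinom{n-1}{d-1}=(n-1)\tbinom{n-2}{d-1}$, to $\tbinom{n}{d-1}\le n$, which is false precisely when $d>2$ (as $n>d$). To salvage your route you would need both to descend from the local hypothesis without assuming a graded embedding and to supply the missing Hilbert-function step, and the latter is exactly where the linear presentation must enter.
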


\begin{proof}  If $d=2$, then $I = \m^{n-1}$ and so $\calR$ is an almost Gorenstein graded ring (Corollary \ref{3.5}). We assume that $d > 2$ and that $\calR_{\fkM}$ is an almost Gorenstein local ring. The goal is to produce a contradiction.  Let $\Delta_i = (-1)^{i+1} \det \varphi_i$ for each $1 \le i \le n$, where $\varphi_i$ stands for the $(n-1)\times (n-1)$ matrix which is obtained from $\varphi$ by deleting the $i$-th row. Hence $I = (\Delta_1, \Delta_2, \ldots, \Delta_n)$ and the ideal $I$ has a presentation
$$(*)\ \ \ 0 \to R^{\oplus (n-1)} \overset{\varphi}{\to} R^{\oplus n} \overset{\left[\begin{smallmatrix}
\Delta_1&\Delta_2&\cdots&\Delta_n\\
\end{smallmatrix}\right]}
{\longrightarrow} I \to 0.$$
Notice that $\calR$ is not a Gorenstein ring, since $\rmr(\calR)= \mu_R(\m^{n-d})= \binom{n-1}{d-1} >1$. We set $A = \calR_\fkM$  and  $\n = \fkM A$; hence $\mathrm{K}_A = [\mathrm{K}_\calR]_\fkM$. We take an  exact sequence
$$
0 \to A \overset{\phi}{\to} \mathrm{K}_A \to C \to 0
$$
of $A$-modules such that $C \neq (0)$ and $C$ is an Ulrich $A$-module. Let $f = \phi(1)$. Then $f \not\in \n \mathrm{K}_A$ by \cite[Corollary 3.10]{GTT} and we get the exact sequence
$$(**) \ \ \ \ \ 
0 \to \n f \to \n \rmK_A \to \n C \to 0.
$$
Because   
$
\n C = (f_1, f_2, \ldots, f_d) C
$
for some $f_1, f_2, \ldots, f_d \in \n$ (\cite[Proposition 2.2]{GTT}) and $\mu_A(\n) = d+n$, we get by the exact sequence ($**$) that 
$$
\mu_{\calR}(\fkM \rmK_{\calR}) 
=   \mu_{A}(\fkn \rmK_A) 
\le    (d + n) +d {\cdot}\left(\mathrm{r}(A)-1\right) 
=  d \binom{n-1}{d-1} + n,
$$
 while 
$$
\mu_{\calR}(\fkM \rmK_{\calR})  = \mu_R(\m^{n-d+1}) + \mu_R(\m^{n-d}I) = \binom{n}{d-1} + \mu_R(\m^{n-d}I)
$$ 
since $\fkM =(\m, It)\calR$ and $\mathrm{K}_\calR(1) = \m^{n-d}\calR$.
Consequently we have
$$
\mu_R(\m^{n-d} I ) \le d \binom{n-1}{d-1} + n - \binom{n}{d-1}.
$$
To estimate the number $\mu_R(\m^{n-d} I )$ from below, we consider  the homomorphism $$\psi : \m^{n-d}\otimes_R I \to \m^{n-d} I$$ defined by  $x\otimes y\mapsto xy $ and set $X= \Ker \psi$.
Let $x \in X$ and write $x = \sum_{i=1}^dx_i\otimes \Delta_i$ with $x_i \in \m^{n-d}$. Then since $\sum_{i=1}^dx_i\Delta_i= 0$ in $R$ and since every entry of the matrix $\varphi$ is linear, the presentation $(*)$ of $I$ guarantees the existence of   elements  $y_j \in \m^{n-d-1}$~$(1 \le j \le n-1)$ such that
$$
\begin{pmatrix}
x_1 \\
x_2 \\
\vdots \\
x_n
\end{pmatrix}= \varphi
\begin{pmatrix}
y_1 \\
y_2 \\
\vdots \\
y_{n-1}
\end{pmatrix}.
$$
Hence $X$ is a homomorphic image of  $[\m^{n-d-1}]^{\oplus (n-1)}$ and therefore 
$$
\mu_R(X) \le (n-1) \binom{n-2}{d-1}
$$
in the exact sequence 
$$0 \to X \to \m^{n-d}\otimes_RI \to \m^{n-d}I \to 0.$$
Consequently 
$$
\mu_R(\m^{d-n}I) \ge n \binom{n-1}{d-1} - (n-1)\binom{n-2}{d-1},
$$
so that 
\begin{eqnarray*}
0 &\le& \left[d\binom{n-1}{d-1} + n - \binom{n}{d-1}\right] -\left[n \binom{n-1}{d-1} - (n-1) \binom{n-2}{d-1}\right] \\
   &=& \left[(d-n) \binom{n-1}{d-1} + (n-1) \binom{n-2}{d-1} \right] + \left[ n - \binom{n}{d-1} \right] \\
   &=& n - \binom{n}{d-1} < 0
\end{eqnarray*}
which is the required contradiction. Thus  $A$ is not an almost Gorenstein local ring, if $d>2$.
\end{proof}

Before closing this section, let us note one concrete example.

\begin{ex}
Let $R = k[[x,y,z]]$ be the formal power series ring over an infinite  field $k$. We set $I =(x^2y,y^2z,z^2x,xyz)$ and $Q = (x^2y, y^2z, z^2x)$. Then $Q$ is a minimal reduction of $I$ with $\mathrm{red}_Q(I)=2$. The ideal $I$ has a presentation of the form 
$$0 \to \R^{\oplus 3} \overset{\varphi}{\to} R^{\oplus 4} \to I \to 0$$
with $\varphi = \left(\begin{smallmatrix}
x&0&0\\
0&y&0\\
0&0&z\\
y&z&x
\end{smallmatrix}\right)$ and it is direct to check that $I$ satisfies all the conditions required for Theorem \ref{4.1}. Hence Theorem \ref{4.1} shows that  $\calR (I)$ cannot be an almost Gorenstein graded ring, while $Q$ is not a perfect ideal of $R$ but its Rees algebra  $\calR(Q)$ is an almost Gorenstein graded ring with $\mathrm{r}(\calR) = 2$; see \cite{Kamoi}. 
\end{ex}

\section{The Rees algebras of socle ideals $I = (a,b):\m$}
Throughout this section let $(R,\m)$ denote a two-dimensional regular local ring and let $Q=(a,b)$ be a parameter ideal of $R$. We set $I = Q:\m$, $\calR = \calR(I)$, and $\fkM = \m \calR+ \calR_+$. In this section we are interested in the question of when $\calR_\fkM$ is an almost Gorenstein local ring. If $Q \not\subseteq \m^2$, then $Q:\m$ is a parameter ideal of $R$ (or the whole ring $R$) and $\calR$ is a Gorenstein ring.

Assume that $Q \subseteq \m^2$. Hence $I^2 = QI$ (see e.g., \cite{W}) with $\mu_R(I)=3$. We write $I = (a,b,c)$. Then since $xc, yc \in Q$, we get equations
$$f_1a + f_2b + xc = 0 \ \ \ \text{and}\ \ \ g_1a + g_2b + yc = 0$$
with $f_i, g_i \in \m$~($i= 1,2$). With this notation we have the following.

\begin{thm}\label{thm1}
If $(f_1, f_2, g_1,g_2) \subseteq \m^2$, then $\calR_\fkM$ is not an almost Gorenstein local ring. 
\end{thm}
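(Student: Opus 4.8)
The plan is to first pin down the graded canonical module and the Cohen--Macaulay type of $\calR$, then reduce the assertion to the failure of the criterion in Theorem \ref{3.3}, and finally defeat that criterion by a short linear-algebra computation in $\m I/\m^2 I$ that is driven precisely by the hypothesis $(f_1,f_2,g_1,g_2)\subseteq\m^2$.

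First I would compute $J=Q:I$. Since $R$ is regular, hence Gorenstein, and $Q=(a,b)$ is generated by a regular sequence with $\m\supseteq Q$, linkage in the Gorenstein Artinian ring $R/Q$ gives $Q:(Q:\m)=\m$, that is $J=Q:I=\m$. Hence by Proposition \ref{3.1} we have $\rmK_{\calR}(1)\cong\m\calR$, and therefore $\mathrm{r}(\calR)=\mu_R(J)=\mu_R(\m)=2$; in particular $\calR$ is not a Gorenstein ring. Because the type is $2$, in any almost Gorenstein structure the Ulrich cokernel $C$ satisfies $\mu(C)=\mathrm{r}(\calR)-1=1$, so it is cyclic, and then the generation of its maximal ideal is automatically by a system of parameters which (over the infinite residue field) can be taken homogeneous. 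Combining this with the graded--local correspondence of \cite[Theorem 3.9]{GTT}, I would reduce the statement ``$\calR_\fkM$ is almost Gorenstein local'' to condition $(2)$ of Theorem \ref{3.3}; with $J=\m$ this reads: there exist $f\in\m$, $g\in I$ and $h\in\m$ with $\m^2=f\m+\m h$ and $\m I=g\m+Ih$. The goal then becomes to show that, under the hypothesis, no such $g,h$ exist.

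Next I would analyze $\m I$ modulo $\m^2 I$. Using the two syzygies $f_1a+f_2b+xc=0$ and $g_1a+g_2b+yc=0$, which by Hilbert--Burch generate the whole syzygy module of $I$, one eliminates $xc$ and $yc$, so that $\m I$ is minimally generated by $xa,ya,xb,yb$ and $\dim_k(\m I/\m^2 I)=4$. The hypothesis $(f_1,f_2,g_1,g_2)\subseteq\m^2$ says exactly that $xc,yc\in\m^2 I$, so $c$ becomes invisible in $\m I/\m^2 I$: for $h\in\m$ one gets $hc\in\m^2 I$, and for $g=\gamma_1a+\gamma_2b+\gamma_3c\in I$ the summand $\gamma_3c$ contributes nothing to $gx,gy$ modulo $\m^2 I$. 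Writing $\bar\gamma_i\in k$ for the residues of $\gamma_i$ and $\lambda x+\mu y$ for the linear part of $h$, I would record the images
$$ gx\equiv\bar\gamma_1\,xa+\bar\gamma_2\,xb,\quad gy\equiv\bar\gamma_1\,ya+\bar\gamma_2\,yb,\quad ha\equiv\lambda\,xa+\mu\,ya,\quad hb\equiv\lambda\,xb+\mu\,yb,\quad hc\equiv0 $$
in $\m I/\m^2 I$. The commutation $gh=hg$ now reads $\lambda\,gx+\mu\,gy\equiv\bar\gamma_1\,ha+\bar\gamma_2\,hb$, a non-trivial linear dependence among $gx,gy,ha,hb$ unless $h\in\m^2$ or $\bar\gamma_1=\bar\gamma_2=0$, in which degenerate cases $g\m+Ih$ already maps into a subspace of dimension at most $2$. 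Hence in every case the generators of $g\m+Ih$ span a subspace of $\m I/\m^2 I$ of dimension at most $3<4$, so by Nakayama's lemma $g\m+Ih\subsetneq\m I$ for all admissible $g,h$. This contradicts condition $(2)$, and therefore $\calR_\fkM$ is not almost Gorenstein local.

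I expect the main obstacle to be the reduction step, namely making rigorous that an almost Gorenstein \emph{local} structure on $\calR_\fkM$ forces the \emph{graded} equations of Theorem \ref{3.3}$(2)$; this is where the type-$2$ hypothesis (cyclic Ulrich cokernel) and the graded--local comparison of \cite{GTT} must be combined, and where one must check that the canonical module localizes as expected, $\rmK_{\calR_\fkM}\cong(\m\calR)_\fkM$. The linear-algebra core is routine once the key observation $xc,yc\in\m^2 I$ is in hand; its only subtlety is to confirm that the dependence $\lambda\,gx+\mu\,gy\equiv\bar\gamma_1\,ha+\bar\gamma_2\,hb$ genuinely drops the rank, which is handled by separating the two degenerate cases as above.
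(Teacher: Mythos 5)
Your linear-algebra core is correct, but the reduction step on which everything hangs is a genuine gap, and it is exactly the one you flagged without closing. Theorem \ref{3.3} is an equivalence between two intrinsically \emph{graded} conditions: its condition $(1)$ demands a homogeneous embedding $\calR \to \rmK_{\calR}(1)$ with $\fkM C=(\xi,\eta)C$ for \emph{homogeneous} $\xi,\eta$, and the proof of $(1)\Rightarrow(2)$ uses the degrees of $\xi,\eta$ in an essential way. So your computation in $\m I/\m^2 I$ (which is fine: $J=Q:I=\m$ by linkage, the hypothesis gives $xc,yc\in\m^2 I$, and the relation $\lambda\,gx+\mu\,gy\equiv\bar\gamma_1\,ha+\bar\gamma_2\,hb$ together with the two degenerate cases kills condition $(2)$ for every $g,h$) only shows that $\calR$ admits no exact sequence as in Theorem \ref{3.3}$(1)$. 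The theorem, however, asserts that $\calR_\fkM$ is not an almost Gorenstein \emph{local} ring: the local definition allows an arbitrary, non-homogeneous embedding $\calR_\fkM\to \rmK_{\calR_\fkM}$ and arbitrary elements generating $\n C$. Your homogenization claim --- that because $\mathrm{r}(\calR)=2$ the Ulrich cokernel is cyclic and its defining data ``can be taken homogeneous'' --- is unjustified: \cite[Theorem 3.9]{GTT} goes only from graded to local, its converse fails in general, and neither cyclicity of $C$ nor the infinite residue field lets you replace a local embedding of the canonical module by a graded one. As it stands your argument proves the weaker statement that $\calR$ is not almost Gorenstein graded (via the structure of Theorem \ref{3.3}), not the stated local conclusion.

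The paper circumvents this by working with a genuinely local obstruction. Lemma \ref{lem1} and Proposition \ref{lem2} produce a minimal graded free resolution of $\calR$ over $\calS=R[X,Y,Z]$, hence a $2\times 3$ presentation matrix $\Bbb{N}$ of $\rmK_\calR$; Hilbert--Burch applied to that resolution, combined with $(f_1,f_2,g_1,g_2)\subseteq\m^2$, forces $\Bbb{N}$ to be congruent modulo $\fkN^2$ to a matrix whose last two columns are multiples of $Z$ (with $(\alpha,\beta)=\m$ in the first column), and then the presentation-matrix criterion \cite[Theorem 7.8]{GTT} --- which applies to the localization $\calR_\fkM$ with no homogeneity assumption --- shows that $\calR_\fkM$ cannot be almost Gorenstein local, because no elementary row and column operations bring $\Bbb{N}$ to a form whose first row is part of a regular system of parameters of $\calS_\fkN$. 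To repair your proof you would need a local tool of this kind (or some other argument ruling out non-graded embeddings $\calR_\fkM\hookrightarrow\rmK_{\calR_\fkM}$); the graded computation alone cannot deliver the local statement. That said, your mod-$\m^2 I$ argument is a clean and more elementary route to the graded half, and it would suffice, e.g., for the ``only if'' direction of Corollary \ref{badex} at the graded level.
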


We divide the proof of Theorem \ref{thm1} into a few steps. Let us  begin with the following. 

\begin{lem}\label{lem1} Let $\Bbb{M} = \begin{pmatrix}
f_1&f_2&x\\
g_1&g_2&y\\
\end{pmatrix}$. Then 
$R/I$ has  a minimal free resolution
$$0 \to R^{\oplus 2} \overset{{}^t\Bbb{M}}
{\longrightarrow} R^{\oplus 3} \overset{[\begin{smallmatrix}
a&b&c\\
\end{smallmatrix}]}
{\longrightarrow} R \to R/I \to 0.$$ 
\end{lem}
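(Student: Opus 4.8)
The goal is to exhibit the minimal free resolution of $R/I$ where $I=(a,b,c)=Q:\m$ and $Q=(a,b)$ with $Q\subseteq\m^2$. The plan is to verify that the displayed complex is exact and that it is minimal, reading off the maps from the structure of the socle ideal.

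\medskip

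First I would check that the given sequence is a complex. The rightmost map sends the standard basis vector $e_i$ to $a,b,c$ respectively, with image exactly $I$, so that portion is clear. For the composite with ${}^t\Bbb{M}$ I must verify that the two columns of ${}^t\Bbb{M}$, namely ${}^t(f_1,f_2,x)$ and ${}^t(g_1,g_2,y)$, map to zero under $[a\ b\ c]$; this is precisely the content of the two given relations $f_1a+f_2b+xc=0$ and $g_1a+g_2b+yc=0$. So the complex condition is immediate from the defining equations.

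\medskip

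Next I would establish exactness. Since $R$ is a two-dimensional regular (hence Cohen-Macaulay) local ring and $R/I$ has finite length (as $I$ is $\m$-primary), the projective dimension of $R/I$ is $2$ by the Auslander-Buchsbaum formula, so a minimal resolution has length exactly $2$ and the displayed shape is correct. The key point is that ${}^t\Bbb{M}$ is injective and its image is exactly the syzygy module of $(a,b,c)$. Injectivity of ${}^t\Bbb{M}$ follows because $x,y$ form a system of parameters: a relation in the kernel would force a dependence among $x,y$ over $R$, which is impossible as $x,y$ is a regular sequence. For exactness at the middle term, I expect to argue that every syzygy of $(a,b,c)$ arises from these two relations. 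Here the essential fact is that $c$ multiplies $\m=(x,y)$ into $Q=(a,b)$ (since $I=Q:\m$ means $\m c\subseteq Q$), and that $a,b$ is a regular sequence; any syzygy $\alpha a+\beta b+\gamma c=0$ then yields $\gamma c\in Q$, forcing $\gamma\in(x,y)=\m$, after which one reduces to the Koszul syzygy of the regular sequence $a,b$ to solve for the remaining coefficients. This matches the two columns of ${}^t\Bbb{M}$. Alternatively, and more cleanly, I would invoke the Hilbert-Burch theorem: a perfect ideal of grade $2$ with a presentation by an $n\times(n-1)$ matrix is resolved by that matrix, with the ideal generated by the maximal minors. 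One checks that the signed maximal minors of ${}^t\Bbb{M}$ recover $a,b,c$ up to a unit and a common factor, which pins down the resolution.

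\medskip

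Finally, minimality is the claim that every entry of both matrices lies in $\m$. The entries $a,b,c$ lie in $\m$ because $I\subseteq\m$. For ${}^t\Bbb{M}$, the entries $x,y$ generate $\m$, hence lie in $\m$, and the entries $f_i,g_i$ lie in $\m$ by the way they were chosen in the two displayed relations (indeed the very hypothesis of Theorem \ref{thm1} places them in $\m^2\subseteq\m$). Thus both differentials have entries in $\m$ and the resolution is minimal. The main obstacle I anticipate is the exactness in the middle, that is, proving the two relations generate all syzygies; the Hilbert-Burch route sidesteps the explicit syzygy chase and is the cleanest way to close the argument, so I would lean on it rather than grinding through the module computation by hand.
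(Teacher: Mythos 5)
Your reduction of a general syzygy $(\alpha,\beta,\gamma)$ of $(a,b,c)$ does not close as claimed, and this is a genuine gap. From $\gamma c\in Q$ you do get $\gamma\in\m$ (note this needs not just $\m c\subseteq Q$ but also $c\notin Q$, i.e. $Q:c=\m$ rather than $R$; this holds since $\mu_R(I)=3$). But after writing $\gamma=ux+vy$ and subtracting $u\,{}^t(f_1,f_2,x)+v\,{}^t(g_1,g_2,y)$, the Koszul argument on the regular sequence $a,b$ leaves a \emph{residual} syzygy $\lambda\,{}^t(b,-a,0)$. This does not ``match the two columns of ${}^t\Bbb{M}$'': your chase only shows the syzygy module is generated by the two columns together with the Koszul syzygy ${}^t(b,-a,0)$, and eliminating the latter amounts to knowing that $a$ and $b$ are unit multiples of the $2\times2$ minors of $\Bbb{M}$ involving the column ${}^t(x,y)$ --- exactly the Hilbert--Burch conclusion being proved. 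Your fallback via the converse of Hilbert--Burch has the same circularity: to invoke it you must verify that $I_2(\Bbb{M})$ has grade $2$ and that its signed maximal minors generate $I$ up to a unit, and ``one checks'' is precisely the nontrivial content; indeed, in the paper the minor identities $G=-\varepsilon\Delta_2$, $H=\varepsilon\Delta_3$ are \emph{deduced} from Lemma \ref{lem1} in the proof of Theorem \ref{thm1}, not used as input to it. (Your injectivity remark is also too quick: $ux+vy=0$ has the nontrivial solutions $w\,(y,-x)$, so regularity of $x,y$ alone does not give injectivity of ${}^t\Bbb{M}$; but injectivity is cheap once middle exactness is settled, by a rank count.)

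The paper closes the argument by a short counting device that sidesteps both of your routes. Since $I$ is $\m$-primary with $\mu_R(I)=3$ and $\operatorname{pd}_R R/I=2$ by Auslander--Buchsbaum, the first syzygy module $Z\subseteq\m{\cdot}R^{\oplus3}$ of $(a,b,c)$ is free of rank $2$, so $\mu_R(Z)=2$. The two columns $\mathbf f={}^t(f_1,f_2,x)$ and $\mathbf g={}^t(g_1,g_2,y)$ lie in $Z$, and because $f_i,g_i\in\m$ while $x,y$ form a regular system of parameters, $\mathbf f,\mathbf g$ are linearly independent modulo $\m^2{\cdot}R^{\oplus3}$; since $\m Z\subseteq\m^2{\cdot}R^{\oplus3}$, their images in the $2$-dimensional space $Z/\m Z$ are independent, hence span it, so by Nakayama $\mathbf f,\mathbf g$ generate $Z$, giving exactness and minimality at once. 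Supplying this step (or an honest verification that $\operatorname{grade} I_2(\Bbb{M})=2$ with the minors generating $I$ up to a unit) is what your proposal is missing.
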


\begin{proof}
Let 
$\mathbf{f} = \begin{pmatrix}
f_1\\
f_2\\
x
\end{pmatrix}$ and 
$\mathbf{g} = \begin{pmatrix}
g_1\\
g_2\\
y
\end{pmatrix}$. Then $\mathbf{f}, \mathbf{g} \in \m{\cdot}R^{\oplus 3}$. As $\mathbf{f}, \mathbf{g}~\mod~\m^2{\cdot}R^{\oplus 3}$ are linearly independent over $R/\m$, the complex 
$$0 \to R^{\oplus 2} \overset{{}^t\Bbb{M}}
{\longrightarrow} R^{\oplus 3} \overset{[\begin{smallmatrix}
a&b&c\\
\end{smallmatrix}]}
{\longrightarrow} R \to R/I \to 0$$
is exact and gives rise to a minimal free resolution of $R/I$.
\end{proof}

Let  $\calS =R[X,Y,Z]$ be the polynomial ring and let $\varphi : \calS \to \calR =R[It]$~($t$ an indeterminate) be the $R$-algebra map defined by $\varphi (X) = at$, $\varphi (Y) = bt$, and $\varphi (Z) = ct$. Let $K= \Ker \varphi$.  Since $c^2 \in QI$, we have  a relation of the form
$$c^2 = a^2f +b^2g + abh + bci + caj$$
with $f,g,h, i,j \in R$. We set 
\begin{eqnarray*}
F &=& Z^2 - \left(fX^2 + gY^2 + hXY + iYZ + jZX \right),\\
G&=& f_1X+f_2Y+xZ,\\
H&=&g_1X + g_2Y + yZ. 
\end{eqnarray*}
Notice that $F\in \calS_2$ and $G,H \in \calS_1$.

\begin{prop}\label{lem2} $\calR$ has a minimal graded free resolution of the form
$$0 \to \calS(-2)\oplus\calS(-2) \overset{{}^t\Bbb{N}}{\longrightarrow} \calS(-2)\oplus \calS(-1) \oplus \calS(-1) \overset{[\begin{smallmatrix}
F&G&H\\
\end{smallmatrix}]}{\longrightarrow} \calS \to \calR \to0,$$ so that the graded canonical module $\mathrm{K}_\calR$ of $\calR$ has a presentation
$$ \calS(-1)\oplus \calS(-2) \oplus \calS(-2)  \overset{\Bbb{N}}{\longrightarrow} \calS(-1)\oplus \calS(-1)\to \mathrm{K}_\calR \to 0.$$
\end{prop}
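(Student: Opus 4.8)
The plan is to identify the defining ideal $K=\Ker\varphi$ of $\calR=\calS/K$ as $(F,G,H)$, to resolve $\calS/(F,G,H)$ by the Hilbert--Burch theorem, and then to read off the presentation of $\mathrm{K}_\calR$ by dualizing. First I would record the elementary facts. Applying $\varphi$ shows $F,G,H\in K$, so $(F,G,H)\subseteq K$; and since the coefficients $f_1,f_2,x$ of $G$ and $g_1,g_2,y$ of $H$ all lie in $\m$, we also have $(G,H)\subseteq\m\calS$. The linear forms $G,H$ are exactly the relations coming from the columns of ${}^t\Bbb{M}$ in Lemma \ref{lem1}, so $\Sym(I)=\calS/(G,H)$; a direct computation of its dimension gives $\dim\calS/(G,H)=\dim\calR=3$, whence $\grade(G,H)=\height(G,H)=2$ and $G,H$ form a regular sequence in the Cohen--Macaulay ring $\calS$.

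The heart of the argument is to produce the syzygy matrix ${}^t\Bbb{N}$, and for this I would use the Jacobian dual. Writing each entry of $\Bbb{M}$ as an $R$-linear combination of $x$ and $y$ yields a $2\times 2$ matrix $B$, with entries linear in $X,Y,Z$, such that $B\cdot{}^t(x,y)={}^t(G,H)$ in $\calS$. The adjugate identity $\operatorname{adj}(B)\cdot{}^t(G,H)=\det(B)\cdot{}^t(x,y)$ then gives $x\det(B),\,y\det(B)\in(G,H)\subseteq K$, and since $\calR$ is a domain this forces $\det(B)\in K$. As $\det(B)=Z^2-(\cdots)$ is monic in $Z$, it is a legitimate choice of degree-two relation, so I may take $F=\det(B)$ (any other valid choice differs from $\det B$ by an element of $(G,H)$, hence leaves both the ideal and the minimal resolution unchanged). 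The two rows of $\operatorname{adj}(B)$ now read as syzygies $-xF+(\cdots)G+(\cdots)H=0$ and $-yF+(\cdots)G+(\cdots)H=0$, which I assemble into the $3\times 2$ matrix ${}^t\Bbb{N}$; its signed maximal minors are, up to sign, precisely $F,G,H$. Since $\grade(F,G,H)\ge\grade(G,H)=2$, the Buchsbaum--Eisenbud acyclicity criterion makes the displayed complex a resolution of $\calS/(F,G,H)$, so this ring is perfect (Cohen--Macaulay) of grade $2$; the resolution is minimal because every entry of ${}^t\Bbb{N}$ (the constants $-x,-y$ and linear forms) and of $[\,F\ G\ H\,]$ lies in $\fkM$.

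Next I would prove $(F,G,H)=K$. Both ideals have height $2$ with $(F,G,H)\subseteq K$, so it suffices to control the minimal primes. Away from $\m$ the ideal $I$ is principal, hence of linear type, so $\Sym(I)$ agrees with $\calR$ there; together with $(G,H)\subseteq\m\calS$ this gives $\Min(G,H)=\{K,\m\calS\}$, the second prime being the special fibre. The decisive point is that $F$ is monic in $Z$, so $F\notin\m\calS$; since $\calS/(F,G,H)$ is unmixed, all its minimal primes have height $2$, each contains $K$ or $\m\calS$, and the one containing $\m\calS$ is excluded by $F\notin\m\calS$. Hence $\Min(F,G,H)=\{K\}$ and $(F,G,H)$ is $K$-primary. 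Finally, because $K\not\supseteq\m\calS$ the algebras $\Sym(I)$ and $\calR$ agree at $K$, i.e.\ $(G,H)_K=K_K$, so $(F,G,H)_K=K_K$ and the $K$-primary ideal $(F,G,H)$ must equal $K$; thus $\calR=\calS/(F,G,H)$ with the stated resolution.

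For the last assertion I would dualize the resolution into $\calS$ and twist by the graded canonical module $\mathrm{K}_\calS=\calS(-3)$ of the regular ring $\calS=R[X,Y,Z]$. As $\calR$ is perfect of grade $2$, $\mathrm{K}_\calR=\Ext^2_\calS(\calR,\calS(-3))$ is the cokernel of the dual of ${}^t\Bbb{N}$, namely of $\Bbb{N}\colon\calS(-1)\oplus\calS(-2)\oplus\calS(-2)\to\calS(-1)\oplus\calS(-1)$, which is exactly the presentation in the statement. The hard part will be the middle step: extracting the explicit Hilbert--Burch matrix from the Jacobian dual so that its minors are literally $F,G,H$, and then upgrading $(F,G,H)\subseteq K$ to equality by killing the extra special-fibre component $\m\calS$. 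The single observation that $F$ is monic in $Z$ is what drives both of these.
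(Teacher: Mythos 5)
Your proposal is correct, but it takes a genuinely different route from the paper. The paper's proof is short because it quotes the structural fact $K=\calS K_1+(F)$ (a standard consequence of $I^2=QI$, $c^2\in QI$), so that $\mu_\calS(K)=3$ with generator degrees $2,1,1$; perfection of $\calR$ then gives a Hilbert--Burch resolution with unknown twists $m,\ell$, and these are pinned down to $m=\ell=2$ by dualizing into $\mathrm{K}_\calS=\calS(-3)$, using $\rma(\calR)=-1$ to force $m,\ell\le 2$, and excluding $m=1$ (resp.\ $\ell=1$) by the degree argument $\alpha_2G+\alpha_3H=0$ with $\alpha_2,\alpha_3\in R$, which contradicts the linear independence of $\mathbf{f},\mathbf{g}$ modulo $\m^2{\cdot}R^{\oplus 3}$ from Lemma \ref{lem1}. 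You instead construct the syzygy matrix explicitly: the Jacobian dual $B$ with $B\,{}^t(x\ y)={}^t(G\ H)$, the adjugate identity giving $x\det B,\,y\det B\in(G,H)$ and (via $\calR$ being a domain) $\det B\in K$, the $3\times 2$ matrix assembled from $\operatorname{adj}(B)$ whose signed minors are $\pm F,\pm G,\pm H$, Buchsbaum--Eisenbud acyclicity from $\grade(F,G,H)\ge\grade(G,H)=2$, and then the equality $(F,G,H)=K$ proved from scratch: linear type of $I$ off $\m$ gives $\Min(G,H)=\{K,\m\calS\}$ and $(G,H)_K=K_K$, while $F$ monic in $Z$ kills the fibre component $\m\calS$, so unmixedness forces $(F,G,H)$ to be $K$-primary and hence equal to $K$. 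What your approach buys: it is self-contained (it re-proves the quoted fact $K=\calS K_1+(F)$ rather than citing it), it does not need $\rma(\calR)=-1$, and it produces $\Bbb{N}$ explicitly --- note your first row $(-x,-y)$ is exactly consistent with the identity $(x,y)=(\alpha,\beta)$ that the paper later extracts from $(**)$ in the proof of Theorem \ref{thm1}. What the paper's route buys is brevity, given the known reduction-number-one structure of $K$.

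One point you should tighten: the parenthetical ``any other valid choice differs from $\det B$ by an element of $(G,H)$'' is invoked before the equality $K=(\det B,G,H)$ is available, and as stated it is not yet justified. It does become a one-line verification afterwards: writing the paper's $F$ as $r\det B+AG+B'H$ with $r\in R$, $A,B'\in\calS_1$, and comparing $Z^2$-coefficients gives $r$ congruent to a unit modulo $(x,y)$, and your own adjugate identity $x\det B,\,y\det B\in(G,H)$ then yields $F\equiv(\text{unit})\det B \bmod (G,H)$, so the generating set $(F,G,H)$ is minimal with the same degrees and the resolution has the stated shape. So reorder: prove $K=(\det B,G,H)$ first, then make the substitution. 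With that rearrangement the argument is complete, including the final dualization $\mathrm{K}_\calR=\Ext^2_\calS(\calR,\calS(-3))$, which matches the paper's.
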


\begin{proof}
We have $K= \calS K_1 + (F)$ (remember that $I^2 = QI$ and $c^2 \in QI$). Hence  $\calR$ has a minimal graded free resolution of the form
$$(*)\ \ \ \ 0 \to \calS(-m)\oplus\calS(-\ell) \overset{{}^t\Bbb{N}}{\longrightarrow} \calS(-2)\oplus \calS(-1) \oplus \calS(-1) \overset{[\begin{smallmatrix}
F&G&H\\
\end{smallmatrix}]}{\longrightarrow} \calS \to \calR \to 0$$
with $m, \ell \ge 1$. We take the $\calS(-3)$-dual of the resolution ($*$). Then as $\mathrm{K}_\calS = \calS(-3)$, we get the presentation 
$$ \calS(-1)\oplus \calS(-2) \oplus \calS(-2)  \overset{\Bbb{N}}{\longrightarrow} \calS(m-3)\oplus \calS(\ell -3)\to \mathrm{K}_\calR \to 0$$
of the canonical module $\mathrm{K}_\calR$ of $\calR$. Hence $m, \ell \le 2$ because $\mathrm{a}(\calR) = -1$. Assume that  $m = 1$. Then the matrix ${}^t\Bbb{N}$ has the form ${}^t\Bbb{N} = 
\begin{pmatrix}
0&\beta_1\\
\alpha_2&\beta_2\\
\alpha_3&\beta_3
\end{pmatrix}$ with $\alpha_2, \alpha_3 \in R$. We have $\alpha_2G + \alpha_3 H = 0$, or equivalently
$\alpha_2 \begin{pmatrix}
f_1\\
f_2\\
x
\end{pmatrix} + \alpha_3\begin{pmatrix}
g_1\\
g_2\\
y
\end{pmatrix}= \mathbf{0}$, whence $\alpha_2 = \alpha_3 = 0$ by Lemma \ref{lem1}. This is impossible, whence $m= 2$. We similarly have $\ell = 2$ and the assertion follows. 
\end{proof}

We are now ready to prove Theorem \ref{thm1}. 

\begin{proof}[Proof of Theorem \ref{thm1}] Let $\Bbb{N}$ be the matrix given by  Proposition  \ref{lem2} and write   $N = \begin{pmatrix}
\alpha&F_1&F_2\\
\beta&G_1&G_2
\end{pmatrix}$. Then Proposition \ref{lem2} shows that $F_i,G_i \in \calS_1$~($i=1,2$) and $\alpha, \beta \in \m$. We write  $F_i = \alpha_{i1}X + \alpha_{i2}Y + \alpha_{i3}Z$ and  $G_i = \beta_{i1}X + \beta_{i2}Y + \beta_{i3}Z$ with $\alpha_{ij}, \beta_{ij} \in R$. Let $\Delta_j$ denote the determinant of the matrix obtained by deleting the $j$-th column from $\Bbb{N}$. Then by the theorem of Hilbert-Burch we have $G= -\varepsilon \Delta_2$ and $H=\varepsilon \Delta_3$ for some unit $\varepsilon$ of $R$, so that 
$$(**) \ \ \ 
\begin{pmatrix}
f_1\\
f_2\\
x
\end{pmatrix}
= (\varepsilon\beta) \begin{pmatrix}
\alpha_{21}\\
\alpha_{22}\\
\alpha_{23}
\end{pmatrix}
-(\varepsilon\alpha) \begin{pmatrix}
\beta_{21}\\
\beta_{22}\\
\beta_{23}
\end{pmatrix}\ \ \ \text{and} \ \ 
\begin{pmatrix}
g_1\\
g_2\\
y
\end{pmatrix}
= (\varepsilon\alpha) \begin{pmatrix}
\beta_{11}\\
\beta_{12}\\
\beta_{13}
\end{pmatrix}
- (\varepsilon\beta) \begin{pmatrix}
\alpha_{11}\\
\alpha_{12}\\
\alpha_{13}
\end{pmatrix}.$$
Hence $$x = \varepsilon\left(\beta \alpha_{23} - \alpha \beta_{23}\right) \ \ \ \text{and} \ \ \ y = \varepsilon\left(\alpha \beta_{13}- \beta \alpha_{13}\right),$$
which shows   $(x,y)=(\alpha, \beta)=\m$,  because  $(x,y) \subseteq (\alpha, \beta) \subseteq \m$. Therefore if $(f_1, f_2, g_1,g_2) \subseteq \m^2$, then equations ($**$) above show $\alpha_{ij}, \beta_{ij} \in \m$ for all $i,j =1,2$, whence 
$$N \equiv \begin{pmatrix}
\alpha&\alpha_{13}Z&\alpha_{23}Z\\
\beta&\beta_{13}Z&\beta_{23}Z
\end{pmatrix} \ \mod~\fkN^2$$
where $\fkN = \m \calS +\calS_+$ is the graded maximal ideal of $\calS$. We set $B = \calS_\fkN$. Then it is clear that after any elementary row and column operations the matrix $\Bbb{N}$ over the regular local ring $B$ of dimension 5 is not equivalent to a matrix of the form
$$
\begin{pmatrix}
\alpha_1&\alpha_2&\alpha_3\\
\beta_1&\beta_2&\beta_3\\
\end{pmatrix} 
$$ with $\alpha_1, \alpha_2, \alpha_3$ a part of a regular system of parameters of $B$. Hence by \cite[Theorem 7.8]{GTT} $\calR_\fkM$ cannot be an almost Gorenstein local ring.
\end{proof}

As a consequence of Theorem \ref{thm1} we get the following.

\begin{cor}
Suppose that $Q=(a,b) \subseteq \m^3$. Then $\calR_\fkM$ is not an almost Gorenstein local ring. 
\end{cor}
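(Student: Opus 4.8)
The plan is to deduce the corollary from Theorem~\ref{thm1} by re-choosing the third generator $c$ so that its hypothesis is met. Since $Q\subseteq \m^3\subseteq \m^2$, we are in the setting preceding Theorem~\ref{thm1}: $I^2=QI$, $\mu_R(I)=3$, and writing $I=(a,b,c)$ and $\m=(x,y)$ we have relations $f_1a+f_2b+xc=0$ and $g_1a+g_2b+yc=0$ with $f_i,g_i\in\m$. The observation that drives the proof is that $c$ is determined only modulo $Q=(a,b)$: replacing $c$ by $c'=c+\alpha a+\beta b$ with $\alpha,\beta\in R$ leaves both $I$ and $Q$ unchanged, and replaces the coefficients by $f_1\mapsto f_1-\alpha x$, $f_2\mapsto f_2-\beta x$, $g_1\mapsto g_1-\alpha y$, $g_2\mapsto g_2-\beta y$. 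Thus it suffices to produce $\alpha,\beta$ for which all four new coefficients lie in $\m^2$, since then Theorem~\ref{thm1} applies verbatim to the generator $c'$.

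First I would extract the crucial constraint on the linear parts of the $f_i,g_i$. Eliminating $c$ from the two relations, by forming $y\cdot(\text{first})-x\cdot(\text{second})$, yields $(yf_1-xg_1)a+(yf_2-xg_2)b=0$. Because $Q=(a,b)$ is a parameter ideal of the Cohen--Macaulay ring $R$, the sequence $a,b$ is regular, so every syzygy of $a,b$ is a multiple of the Koszul relation; hence $yf_1-xg_1=\lambda b$ and $yf_2-xg_2=-\lambda a$ for some $\lambda\in R$. As $a,b\in Q\subseteq\m^3$, both $yf_1-xg_1$ and $yf_2-xg_2$ lie in $\m^3$. Passing to $\gr_\m(R)\cong k[X,Y]$ and comparing the degree-two components, I obtain $Yf_1^{(1)}=Xg_1^{(1)}$ and $Yf_2^{(1)}=Xg_2^{(1)}$, where $f_i^{(1)},g_i^{(1)}$ denote the images of $f_i,g_i$ in $\m/\m^2$ and $X,Y$ the images of $x,y$. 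Since $X$ and $Y$ are coprime in $k[X,Y]$, this forces $f_1^{(1)}=\nu X$, $g_1^{(1)}=\nu Y$, and $f_2^{(1)}=\mu X$, $g_2^{(1)}=\mu Y$ for some scalars $\nu,\mu\in k$.

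With this in hand, I would choose $\alpha,\beta\in R$ whose residues modulo $\m$ are $\nu,\mu$ and set $c'=c+\alpha a+\beta b$. Modulo $\m^2$ the new coefficients become $f_1-\nu x\equiv 0$, $g_1-\nu y\equiv 0$, $f_2-\mu x\equiv 0$, and $g_2-\mu y\equiv 0$, so $(f_1',f_2',g_1',g_2')\subseteq\m^2$. Theorem~\ref{thm1}, applied with $c'$ as the third generator, then gives that $\calR_\fkM$ is not an almost Gorenstein local ring. The main obstacle is exactly the point settled in the second step: the inclusion $(f_1,f_2,g_1,g_2)\subseteq\m^2$ is \emph{not} automatic from $Q\subseteq\m^3$, because the $f_i,g_i$ will in general have nonzero linear parts; the content of the argument is that, via the Koszul syzygy of the regular sequence $a,b$, these linear parts are pinned down to the coordinated form $\nu X,\nu Y$ and $\mu X,\mu Y$, which is precisely what a single admissible change $c\mapsto c+\alpha a+\beta b$ can absorb.
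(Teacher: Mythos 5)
Your proof is correct, but it takes a genuinely different route from the paper's. The paper never needs to normalize an arbitrary socle generator: using $Q\subseteq\m^3$ it writes $\left(\begin{smallmatrix} a\\ b \end{smallmatrix}\right)=\left(\begin{smallmatrix} f_{11}&f_{12}\\ f_{21}&f_{22} \end{smallmatrix}\right)\left(\begin{smallmatrix} x\\ y \end{smallmatrix}\right)$ with all $f_{ij}\in\m^2$, invokes the classical determinantal description of the link, namely $Q:c=\m$ and $Q:\m=Q+(c)$ for $c=\det\left(\begin{smallmatrix} f_{11}&f_{12}\\ f_{21}&f_{22} \end{smallmatrix}\right)$, and then reads off the relations $(-f_{22})a+f_{12}b+cx=0$ and $f_{21}a+(-f_{11})b+cy=0$, whose coefficients lie in $\m^2$ by construction, so Theorem \ref{thm1} applies at once. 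You instead start from an arbitrary third generator $c$ and correct it: eliminating $c$ and using that every syzygy of the regular sequence $a,b$ is Koszul gives $yf_1-xg_1,\,yf_2-xg_2\in(a,b)\subseteq\m^3$, and comparing degree-two components in $\gr_\m(R)\cong k[X,Y]$ pins the linear parts of $f_1,g_1$ (resp.\ $f_2,g_2$) to the coordinated form $\nu X,\nu Y$ (resp.\ $\mu X,\mu Y$), which the admissible replacement $c\mapsto c+\alpha a+\beta b$ absorbs exactly; your bookkeeping of how the coefficients transform ($f_1\mapsto f_1-\alpha x$, $g_1\mapsto g_1-\alpha y$, etc.) is correct, and since $c'\equiv c \bmod Q$ is again a valid third generator of $I$, Theorem \ref{thm1} applies verbatim. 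The paper's construction buys brevity at the price of quoting the socle-generator fact for the determinant; yours buys self-containedness --- you only use that syzygies of a regular sequence are Koszul plus the associated graded ring --- and it makes transparent precisely where $Q\subseteq\m^3$ (rather than merely $\m^2$) enters, namely in forcing the degree-two cancellation $Yf_i^{(1)}=Xg_i^{(1)}$.
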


\begin{proof}
We write $\begin{pmatrix}
a\\
b
\end{pmatrix} = \begin{pmatrix}
f_{11}&f_{12}\\
f_{21}&f_{22}
\end{pmatrix}
\begin{pmatrix}
x\\
y
\end{pmatrix}$ with $f_{ij} \in \m^2$~($i,j=1,2$) and set $c = \det \begin{pmatrix}
f_{11}&f_{12}\\
f_{21}&f_{22}
\end{pmatrix}$. Then $Q : c = \m$ and $Q:\m = Q + (c)$. We have
$$(-f_{22})a+f_{12}b +cx = 0 \ \ \text{and} \ \  f_{21}a + (-f_{11})b+cy = 0.$$  Hence  by Theorem \ref{thm1} $\calR_\fkM$ is not an almost Gorenstein local ring. \end{proof}

\begin{cor}\label{badex}
Let $m \ge n \ge 2$ be integers and set $Q = (x^m, y^n)$. Then $\calR$ is an almost Gorenstein graded ring if and only if $n =2$.
\end{cor}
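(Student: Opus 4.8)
The plan is to establish the two implications separately, after first pinning down the ideals involved. Since $Q=(x^m,y^n)\subseteq\m^2$ is a complete intersection parameter ideal and $R/Q$ is Artinian Gorenstein with socle generated by the image of $x^{m-1}y^{n-1}$, I would first record that
$$I = Q:\m = (x^m,\ y^n,\ x^{m-1}y^{n-1}) \quad\text{and}\quad J = Q:I = \m,$$
the second equality being the Gorenstein duality $Q:(Q:\m)=\m$ in $R/Q$. Writing $a=x^m$, $b=y^n$, $c=x^{m-1}y^{n-1}$ as in the setup preceding Theorem \ref{thm1}, the two defining relations are read off from $xc = a\,y^{n-1}$ and $yc = b\,x^{m-1}$, which give $f_1=-y^{n-1}$, $f_2=0$, $g_1=0$, and $g_2=-x^{m-1}$.

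For the implication that $n\ge 3$ forces $\calR$ to fail to be almost Gorenstein, I would feed these relations into Theorem \ref{thm1}. The relevant coefficient ideal is $(f_1,f_2,g_1,g_2)=(y^{n-1},x^{m-1})$, and since $m\ge n\ge 3$ forces $m-1\ge 2$ and $n-1\ge 2$, it lies in $\m^2$. Theorem \ref{thm1} then gives that $\calR_\fkM$ is not an almost Gorenstein local ring; because an almost Gorenstein graded ring is almost Gorenstein local after localization at its graded maximal ideal (\cite[Theorem 3.9]{GTT}), $\calR$ cannot be almost Gorenstein graded. I note that when $n=2$ one has $f_1=-y\notin\m^2$, so Theorem \ref{thm1} produces no obstruction, which is consistent with the positive result below.

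For the implication that $n=2$ yields an almost Gorenstein graded ring, I would verify condition $(2)$ of Theorem \ref{3.3} by an explicit choice, exploiting $J=\m$. Here $I=(x^m,x^{m-1}y,y^2)$, and I propose $f=y\in\m$, $g=y^2\in I$, and $h=x\in J$. Then $fJ+\m h = y\m+x\m = \m^2 = \m J$, while
$$gJ+Ih = (xy^2,y^3)+(x^{m+1},x^my,xy^2) = (x^{m+1},x^my,xy^2,y^3) = I\m = IJ,$$
the penultimate equality because $x^{m-1}y^2 = x^{m-2}(xy^2)$ is absorbed for every $m\ge 2$. Both equalities of condition $(2)$ of Theorem \ref{3.3} thus hold, and $\calR$ is an almost Gorenstein graded ring.

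The step I expect to demand the most care is this positive direction, where the symmetry between $x$ and $y$ must be broken. The naive choice $h=y$ fails for $m\ge 3$: there $Iy=(x^my,x^{m-1}y^2,y^3)$ omits both $x^{m+1}$ and $xy^2$ from $I\m$, and no single $g\in I$ can recover them through $g\m+Iy$. Taking $h=x$ instead leaves only $y^3$ to be supplied, and $y^2\in I$ does exactly that. Before concluding I would re-confirm that the hypotheses of Theorem \ref{3.3} are in force, namely $Q\subseteq I$ and $I^2=QI$ (valid since $Q\subseteq\m^2$ in a two-dimensional regular ring) and that $J=Q:I=\m$ has been computed correctly.
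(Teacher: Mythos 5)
Your proof is correct. The backward direction coincides with the paper's own argument: from the socle generator $c=x^{m-1}y^{n-1}$ you read off the relations $xc=ay^{n-1}$ and $yc=bx^{m-1}$, so $(f_1,f_2,g_1,g_2)=(y^{n-1},x^{m-1})\subseteq\m^2$ once $m\ge n\ge 3$, and Theorem \ref{thm1} together with the graded-to-local passage rules out the almost Gorenstein property — this is exactly what the paper does, only with the $f_i,g_i$ left implicit. For $n=2$, however, you take a genuinely different route. The paper reads off from Proposition \ref{lem2} the presentation matrix of $\mathrm{K}_\calR$, which for $Q=(x^m,y^2)$ is $\left(\begin{smallmatrix} x&Z&X\\ y&x^{m-2}Y&Z \end{smallmatrix}\right)$, and applies the structural criterion \cite[Theorem 7.8]{GTT}, the point being that the column ${}^t(x\ y)$ is a regular system of parameters. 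You instead verify condition $(2)$ of Theorem \ref{3.3} directly: your computations $J=Q:I=\m$ (correct, by duality in the Artinian Gorenstein ring $R/Q$), $I=(x^m,x^{m-1}y,y^2)$, and the choices $f=y$, $g=y^2$, $h=x$ do give $\m J=fJ+\m h=\m^2$ and $IJ=gJ+Ih=(x^{m+1},x^my,xy^2,y^3)$, as I checked; your observation that the symmetric choice $h=y$ fails for $m\ge 3$ is also accurate. Each approach buys something: the paper's is immediate once the resolution of Proposition \ref{lem2} is in hand and explains the result structurally via the shape of the matrix $\Bbb{N}$, while yours stays entirely within the joint-reduction mechanism of Theorem \ref{3.3} — the same engine as Theorem \ref{3.4} — needs only elementary monomial-ideal computations, and exhibits the embedding $\calR\to\mathrm{K}_\calR(1)$ concretely via $\varphi(1)=h=x$. (A further shortcut in the same spirit: for $n=2$ the ideal $I=(x^m,x^{m-1}y,y^2)$ is integrally closed, so Theorem \ref{3.4} applies directly when the residue field is infinite; your explicit verification has the merit of avoiding even that hypothesis, matching the generality of Section 4.)
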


\begin{proof} Suppose $n = 2$. Then $K = \begin{pmatrix}
x&Z&X\\
y&x^{m-2}Y&Z
\end{pmatrix}
$. Hence by  \cite[Theorem 7.8]{GTT} $\calR$ is an almost Gorenstein graded ring. Conversely, suppose that $\calR$ is an almost Gorenstein graded ring. Then $n =2$ by Theorem \ref{thm1}, because $\calR_\fkM$ is an almost Gorenstein local ring.
\end{proof}




\end{document}